\title{A bibLaTeX example}
\newcommand{\Mod}[1]{\ (\mathrm{mod}\ #1)}
\newcommand{\Q}{\mathbb{Q}}
\newcommand{\Z}{\mathbb{Z}}
\newcommand{\F}{\mathbb{F}}
\newcommand{\PP}{\mathbb{P}}
\newcommand{\BF}{\mathbb{F}}
\newcommand{\CL}{\mathcal{L}}
\newcommand{\CM}{\mathcal{M}}
\newcommand{\CH}{\mathcal{H}}
\newcommand{\CS}{\mathcal{S}}
\newcommand{\CC}{\mathcal{C}}
\DeclareMathOperator{\tr}{tr}
\DeclareMathOperator{\rk}{rk}
\newtheorem{lem}{Lemma}[subsection]
\newtheorem{prop}[lem]{Proposition}
\newtheorem{cor}[lem]{Corollary}
\newtheorem{claim*}{Claim}
\newtheorem{thm}[lem]{Theorem}
\newtheorem{rem}[lem]{Remark}
\title{Average Rank of Elliptic Curves Over Function Fields}
\author{{\i}rmak Bal\c{c}{\i}k}
\address{Northwestern University, Department of Mathematics, 2033 Sheridan Road, Evanston, IL 60208, USA}
\email{irmak.balcik@northwestern.edu}
\begin{document}

\begin{abstract}
    Let $q$ be a prime with $q \geq 5$. We show that the average rank of elliptic curves over a function field $\BF_{q}(t)$, when ordered by naive height, is bounded above by $25/14 \approx 1.8$. Our result improves the previous upper bound of $2.3$ proven by Brumer in \cite{Brumer}. The upper bound obtained is less than $2$, which shows that a positive proportion of elliptic curves has either rank $0$ or $1$. The proof adapts the work of Young, which shows in \cite{Matthew} that under the assumption of the General Riemann Hypothesis for $L$-functions of elliptic curves, the average rank for the family of elliptic curves over the rational numbers is bounded above by $ 25/14 \approx 1.8$. 
\end{abstract}

\maketitle

\section{Introduction}
Let $E$ be an elliptic curve over a global field $K$. The Mordell-Weil theorem asserts that the group of $K$-rational points $E(K)$ of $E$ forms a finitely generated group $E(K) \cong E(K)_{\text{tor}} \oplus \mathbb{Z}^{r}$ where $E(K)_{\text{tor}}$ is the finite torsion subgroup and $r \in \mathbb{Z}^{\geq 0}$ is the rank. The study of ranks of elliptic curves has become a central topic in number theory. It remains nevertheless mysterious. For example, it is not known if the rank of elliptic curves over a given number field is bounded. On the other hand, over function fields, the rank is known to be unbounded due Ulmer \cite{Ulmer-largerank}. To get a better understanding of ranks of elliptic curves in general, one can hope to determine the average rank of elliptic curves. The average rank for the family of elliptic curves over functions fields was first bounded by Brumer \cite{Brumer}, who gave an upper bound of $2.3$. In this paper, we obtain an upper bound less than $2$, which particularly shows that a positive proportion of elliptic curves has either rank $0$ or $1$. 

Let $\CC = \PP^{1}/\BF_{q}$ where $q \geq 5$ is a prime. Let $\BF_{q}(t)=\BF_q(\CC)$ be the function field of the curve $\mathcal{C}$. Each elliptic curve over $\BF_{q}(t)$ has a model
$$
E_{A,B} : y^2 = x^3 + Ax +B 
$$
where $A,B$ are in $\BF_{q}[t]$ satisfying $-16(4A^3 + 27B^2)\neq 0$. 
Define the canonical (naive) \textit{height} of the elliptic curve as follows; let $E'$ be any elliptic curve isomorphic to $E$ of the form $ E'_{C,D} : y^2= x^3+Cx+D$ where $C,D \in \BF_{q}[t],$ then
$$
h(E) := \inf_{E' \cong E}\Big(\max \{3\deg C, 2\deg D\} \Big).
$$
For ease of notation, we introduce 
$$
\mathcal{D}(d) = \{ E_{A,B} : \deg A =  \big \lfloor \frac{d}{3} \big \rfloor,\ \deg B = \big \lfloor \frac{d}{2} \big \rfloor  \}. 
$$
Write $\rk(E)$ for the Mordell-Weil rank of $E(\BF_{q}(t))$.
Our main theorem is the following. 
\begin{thm}
    Let $q \geq 5$ be a prime. Then, 
    $$
        \limsup_{d \rightarrow \infty} \frac{1}{\#\mathcal{D}(d)} \sum_{E \in \mathcal{D}(d)} \rk(E) \leq  \frac{25}{14}.
    $$
\end{thm}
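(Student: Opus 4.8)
The plan is to bound the average Mordell--Weil rank by the average \emph{analytic} rank, and to estimate the latter by an explicit-formula computation, adapting Young's analysis \cite{Matthew} of the number-field family, the decisive simplification being that over $\BF_q(t)$ the Riemann Hypothesis for the relevant $L$-functions --- which Young must assume --- is a theorem of Deligne. Concretely: for $E = E_{A,B}$ over $\BF_q(t) = \BF_q(\CC)$ with $\CC = \PP^1$, the $L$-function $L(E,T)$ (with $T = q^{-s}$) is, by Grothendieck's theory, a polynomial in $\BZ[T]$; its degree $N_E$ is computed from the conductor of $E$ by the Grothendieck--Ogg--Shafarevich formula (for $q \ge 5$ the reduction is tame, so the conductor exponents are at most $2$, and the generic member of $\mathcal{D}(d)$ has $N_E = d + O(1)$), it satisfies a functional equation exchanging $T \leftrightarrow 1/(q^2T)$, and by the Riemann Hypothesis over function fields each inverse root equals $q^{-1}e^{i\theta}$ for a real $\theta$. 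Writing the inverse roots as $q^{-1}e^{i\theta_{j,E}}$, one has $\rk_{\mathrm{an}}(E) := \ord_{T=q^{-1}}L(E,T) = \#\{j : \theta_{j,E} = 0\}$, and the inequality $\rk(E) \le \rk_{\mathrm{an}}(E)$ holds \emph{unconditionally} over function fields (work of Tate and Milne). Hence it suffices to prove the stated bound with $\rk(E)$ replaced by $\rk_{\mathrm{an}}(E)$.

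Next I would invoke the explicit formula. Fix an even Schwartz function $\phi$ with $\phi \ge 0$ on $\BR$ and $\phi(0) > 0$ whose Fourier transform $\hat\phi$ is supported in an interval $(-\delta,\delta)$. Logarithmically differentiating the polynomial $L(E,T)$ and matching the result against its Euler product over the closed points $v$ of $\CC$ (the monic irreducibles of $\BF_q[t]$ together with the place at infinity) gives an identity
\[
\sum_{j} \phi\!\left(\frac{N_E\,\theta_{j,E}}{2\pi}\right) = \hat\phi(0) - \frac{2}{N_E}\sum_{v}\sum_{k \ge 1}\frac{c_{v,k}(E)\,\deg v}{q^{k\deg v/2}}\,\hat\phi\!\left(\frac{k\deg v}{N_E}\right) + O\!\left(\frac{1}{N_E}\right),
\]
where $c_{v,k}(E)$ is the trace of the $k$-th power of the Frobenius at $v$ on $E$, normalised so that $c_{v,k}(E) \in [-2,2]$ (thus $c_{v,1}(E) = a_v(E)q^{-\deg v/2}$ with $a_v(E) = q^{\deg v} + 1 - \#E(\BF_{q^{\deg v}})$, and $c_{v,2}(E) = c_{v,1}(E)^2 - 2$ at places of good reduction); the sum is finite by the support condition. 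Since $\phi \ge 0$ on $\BR$ and the $\theta_{j,E}$ are real --- this is where the function-field Riemann Hypothesis enters --- we get $\rk_{\mathrm{an}}(E) \le \frac{1}{\phi(0)}\sum_{j}\phi\!\left(\frac{N_E\theta_{j,E}}{2\pi}\right)$.

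I would then average this over $E \in \mathcal{D}(d)$; here $N_E = d + O(1)$ for all but a negligible set, and $\#\mathcal{D}(d) \asymp q^{5d/6}$. The term $\hat\phi(0)$ is inert, and the terms with $k \ge 3$ are negligible already termwise because of the factor $q^{-k\deg v/2}$; so the crux is the evaluation of $\frac{1}{\#\mathcal{D}(d)}\sum_{E}c_{v,k}(E)$ for $k \in \{1,2\}$, uniformly in $v$ over the range $k\deg v < \delta\, N_E$ imposed by the support. Expanding $c_{v,1}(E_{A,B}) = -q^{-\deg v/2}\sum_{x}\big(\tfrac{x^3+Ax+B}{v}\big)$ over the residue field at $v$, this becomes a character-sum problem over $\BF_q[t]$: one counts pairs $(A,B)$ of the prescribed degrees, weighted by point counts of the associated Weierstrass curves over $\BF_{q^{\deg v}}$, evaluating the complete sums by Weil's bounds (Deligne's theorem for curves) and the incomplete ones by completion modulo a power of $v$. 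This is precisely the place where Young's argument invokes GRH and where over $\BF_q(t)$ the needed inputs are unconditional. The outcome is that, after averaging, the $k = 2$ terms contribute $\tfrac12\phi(0)$ --- reflecting the orthogonal symmetry of the family, through $\frac{1}{\#\mathcal{D}(d)}\sum_E a_v(E)^2 = q^{\deg v} - 1 + (\text{error})$ together with $c_{v,2} = c_{v,1}^2 - 2$ --- the $k = 1$ terms contribute only a secondary main term (of favourable sign, coming from the pairs $(A,B)$ with bad reduction at $v$) plus error, and the error terms save a power of $\#\mathcal{D}(d)$ provided $\delta$ stays below the admissible threshold. Along the way one restricts to curves with minimal Weierstrass model (no prime $\pi$ with $\pi^4 \mid A$ and $\pi^6 \mid B$), which is a benign local condition at each place, and one checks that the constant and isotrivial members of $\mathcal{D}(d)$ are negligible in number.

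Collecting everything yields $\frac{1}{\#\mathcal{D}(d)}\sum_{E \in \mathcal{D}(d)}\rk_{\mathrm{an}}(E) \le \tfrac12 + \frac{\hat\phi(0)}{\phi(0)} + o(1)$ for every admissible $\phi$. Minimising $\hat\phi(0)/\phi(0)$ over even $\phi \ge 0$ with $\hat\phi$ supported in $(-\delta,\delta)$ gives $1/\delta$ (by Poisson summation, $\delta\,\hat\phi(0) = \sum_n \phi(n/\delta) \ge \phi(0)$, with equality for a Fej\'er-type kernel), so the bound is $\tfrac12 + 1/\delta + o(1)$; taking $\delta$ as large as the error analysis of the previous step permits --- which, since over $\BF_q(t)$ the estimates Young proves under GRH follow from Weil's bounds and the function-field Riemann Hypothesis, is the value $\delta = 7/9$ arising from his analysis --- produces $\tfrac12 + \tfrac97 = \tfrac{25}{14}$. \textbf{The main obstacle} is exactly the family average of the Frobenius traces, in particular the $k = 1$ sums, with error terms uniform enough in $v$ to push $\delta$ all the way to the threshold that yields $25/14$: one must treat the places of small degree separately, keep careful track of the place at infinity of $\CC = \PP^1$ (around which $E_{A,B}$ generically has bad reduction, so it contributes to $N_E$ and to the sign of the functional equation), control the dependence of $N_E$ --- and hence of the rescaling $\theta \mapsto N_E\theta/2\pi$ --- on $(A,B)$, and isolate the secondary main terms correctly. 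By contrast the final optimisation is a short exercise identical to the one in \cite{Matthew}.
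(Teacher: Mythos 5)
Your proposal is correct in outline and follows essentially the same route as the paper: bound rank by analytic rank via Tate, apply a nonnegative Fej\'er-type kernel to the normalized zeros $\mu_i/q$, show the family average of the degree-$n$ Frobenius sums is negligible for $n < (7/9)d$ by completing the $(A,B)$-sums modulo $P$ (Poisson summation, Gauss sums, reciprocity, and Weil/Lindel\"of bounds for the resulting character sums), extract the $+\tfrac12$ from the $k=2$ terms, and optimize to get $\tfrac12 + \tfrac97 = \tfrac{25}{14}$. The only cosmetic differences are that you phrase the harmonic analysis with a Schwartz test function (the paper works directly with the discrete Fej\'er kernel, which is the extremal choice you identify) and that you propose restricting to minimal models, a restriction the paper deliberately avoids.
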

With a little bit more effort one can relate our set $\mathcal{D}(d)$ to the more customary, 
$$
\{ 0 < \deg A \leq \big \lfloor \frac{d}{3} \big \rfloor , \ 0 < \deg B \leq \big \lfloor \frac{d}{2} \big \rfloor \}.
$$
Notice that for $d$ growing to infinity, the relative density of the two sets differ only by $\leq C / q$ with $C$ an absolute constant. With a little bit more effort one can also restrict the summation to minimal models. We avoid these restrictions as they make the proof somewhat more complicated, but for little gain, in our view. 

Brumer in \cite{Brumer} also gave an upper bound of $2.3$ for the average rank of elliptic curves over $\mathbb{Q}$  under the assumption of the General Riemann Hypothesis for elliptic curve $L$-functions. Under the same conditions, his work was later improved to $2$ by Heath-Brown \cite{HBrown}, and then to $25/14 \approx 1.8$ by Young in \cite{Matthew}. The author is inspired by the work of Young \cite{Matthew} and proves the analog of his result in the context of the function field. In the algebraic spectrum of similar studies, Bhargava and Shankar \cite{Bha-Shankar} remarkably proved an unconditional upper bound of $1.5$ for the average rank of elliptic curves over $\mathbb{Q}$. 

\section{Notation and Background}

Let $\BF_{q}$ be a finite field whose cardinality $q$ is a prime with $q \geq 5.$ We denote by $\CH$ the set of polynomials in $\BF_{q}[t]$ and by $\CM$ the set of monic polynomials in $\BF_{q}[t]$. And by $\CH_{n}$ and $\CM_{n}$ the set of polynomials of degree $n$ and the set of monic polynomials of degree $n$ in $\BF_{q}[t],$ respectively. The norm of a nonzero polynomial $F \in \BF_{q}[t]$ is defined as $|F| = q^{\deg F}$ and $|F|=0$ for $F=0$.


\subsection{$L$-functions attached to Dirichlet Characters} Let $h \in \BF_{q}[t]$ be a monic polynomial. Then a \textit{Dirichlet character} of modulus $h$ is a homomorphism 
$$
\chi : ( \BF_{q}[t]/ h \BF_{q}[t])^{\times} \rightarrow \mathbb{C}^{\times}.
$$
For any multiple $m h$ of $h,$ $\chi$ induces a homomorphism $(\BF_{q}[t]/ m h \BF_{q}[t])^{\times} \rightarrow \mathbb{C}^{\times}$
We call a character $\chi$ of modulus $h$ primitive if it cannot be induced from a modulus of lower degree and refer to $h$ as the conductor of $\chi$. We can evaluate a Dirichlet character $\chi$ of
conductor $h$ at an element $g \in \BF_{q}[t]$ by
\[
\chi(g) = \begin{cases} 
\chi ( g \Mod{h}) & \text{if}\ h\ \text{and}\ g\ \text{are coprime,}\\
0 & \text{else.}
\end{cases}
\]
The principal Dirichlet character $\chi_{0}$ of modulus $h$ is defined by the property that $\chi_{0}(g) = 1 $ if $h, g$ are coprime and $\chi_{0}(g)= 0$ otherwise. 

To any Dirichlet character $\chi$, we attach a Dirichlet $L$-function of a complex variable $u$ by
\[
\CL(u,\chi) = \sum_{f \in \CM} \chi(f) u^{\deg f}. 
\]
The $L$-function has Euler product
\begin{align*}
    \CL(u,\chi) = \prod_{P} (1-\chi(P)u^{\deg P})^{-1}
\end{align*}
where the product runs over all monic prime polynomials in $\BF_{q}[t]$. Let $n \geq \deg h$ be an integer. If $\chi$ is non-principal, it is easy to show that 
\[
\sum_{f \in \CM_{n}} \chi(f) = 0 
\]
and thus $\CL(u, \chi)$ is in fact a polynomial of degree at most $\deg h -1.$ If $\chi(ag) = \chi(g)$ for all $a \in \BF_{q}^{\times}$ and $g \in \BF_{q}^{\times},$ then $\chi$ is said to be \textit{even}.

\subsection{Quadratic characters}
The quadratic character $\Big ( \frac{\cdot}{P}\Big)$ for any prime $P \in \BF_{q}[t]$ is defined by
\[
\Big (\frac{f}{P} \Big )= 
\begin{cases} 
1 & \text{if}\ f\ \text{is a non-zero square} \bmod P \\
-1 & \text{if}\ f\ \text{is a non-square} \bmod P  \\
0 & \text{if}\ P | f  
\end{cases}
\]

The above definition can be multiplicatively extended to $\Big (\frac{\cdot}{D}\Big)$ for any non-zero monic $D \in \BF_{q}[t]$ and we write $\chi_{D}$ for the quadratic character $\Big ( \frac{\cdot}{D} \Big)$. The reciprocity law asserts that if $A, B \in \BF_{q}[t]$ are relatively prime, non-zero monic polynomials, then 
$$
\Big ( \frac{A}{B} \Big ) = (-1)^{\frac{q-1}{2}\deg A \deg B} \Big ( \frac{B}{A} \Big ). 
$$
This relation continues to hold if $A,B$ are not coprime as both sides vanish. 

\subsection{Exponential Function} We recall the exponential function introduced by Hayes in \cite{Hayes1966}. Note that the elements of $\BF_q((\frac{1}{t}))$ can be expressed as Laurent series. Specifically, each $a \in \BF_{q}((\frac{1}{t}))$ can be written uniquely as
\begin{align}\label{expansion}
a = \sum\limits_{i \in \Z} a_i \Big (\frac{1}{t}\Big)^i
\end{align}
where $a_i \in \BF_q$ such that all but finitely many $a_i$ with $i \geq 0$ are non-zero. There is a valuation defined by
$$
\nu(a) = \text{smallest}\ i \ \text{such that}\ a_i \neq 0.
$$
For $a \in \BF_{q}((\frac{1}{t})),$ the exponential function is given by
$$
e(a)= e^{2\pi i a_1/q}
$$
where $a_1$ is the coefficient of $\frac{1}{t}$ in \eqref{expansion}. We recall from \cite{Hayes1966} that $e(a+b)=e(a)e(b)$ for $a,b \in \BF_{q}((\frac{1}{t})).$ For $A \in \F_q[t]$ we have $e(A)=1.$ If $A,B,H \in \BF_q[t]$ are such that $A \equiv B \pmod H,$ then $e(A/H)= e(B/H).$

\subsection{Gauss sum}
The generalized Gauss sum is defined by
$$
G(u,\chi)= \sum\limits_{V \Mod f} \chi(V) e \Big (\frac{uV}{f}\Big ).
$$
For our later work, we require knowledge of $G(u,\chi_P)$ for a non-zero monic prime $P \in \BF_{q}[t]$. The next lemma allows us to compute $G(u,\chi_P)$ which is due Gauss. 
\begin{lem}\label{GaussLemma}
    Let $P \in \BF_{q}[t]$ a monic prime polynomial. Then
    \begin{align*}
        G(V, \chi_{P}) = |P|^{1/2}\Big(\frac{V}{P}\Big ) .
    \end{align*}
\end{lem}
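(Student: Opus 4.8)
The plan is to unwind the definition of the Gauss sum, reduce a general $V$ to the case $V=1$ by a change of variables, compute $|G(1,\chi_P)|$ via orthogonality of additive characters, and then pin down the precise value by Gauss's classical evaluation; essentially all the difficulty sits in the last step.

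Unwinding, $G(V,\chi_P)=\sum_{W\bmod P}\chi_P(W)\,e(VW/P)$. If $P\mid V$, then $VW/P\in\BF_q[t]$, so $e(VW/P)=1$ and the sum collapses to $\sum_{W\bmod P}\chi_P(W)=0$, because $\chi_P$ is a nonprincipal character of $(\BF_q[t]/P)^{\times}$ (the field $\BF_q[t]/P$ has exactly $(|P|-1)/2$ nonzero squares), while the right-hand side also vanishes since $\big(\tfrac{V}{P}\big)=0$. So assume $\gcd(V,P)=1$. Then $W\mapsto V^{-1}W$ permutes the residues modulo $P$; using that $e(\,\cdot\,/P)$ depends only on the class mod $P$, that $\chi_P$ is multiplicative, and that $\chi_P(V^{-1})=\chi_P(V)^{-1}=\chi_P(V)$ (as $\chi_P$ is quadratic), this substitution gives $G(V,\chi_P)=\chi_P(V)\,G(1,\chi_P)=\big(\tfrac{V}{P}\big)\,G(1,\chi_P)$. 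It therefore suffices to prove $G(1,\chi_P)=|P|^{1/2}$.

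For the modulus, $|G(1,\chi_P)|^2=\sum_{W_1,W_2\bmod P}\chi_P(W_1)\chi_P(W_2)\,e\big((W_1-W_2)/P\big)$, using $\overline{\chi_P}=\chi_P$ and $\overline{e(a)}=e(-a)$. Discarding the terms with $P\mid W_1$ or $P\mid W_2$ and substituting $W_1=W_2Z$ turns this into $\sum_{Z\bmod P}\chi_P(Z)\sum_{W_2}e\big(W_2(Z-1)/P\big)$, with the inner sum over $W_2$ prime to $P$. The map $W\mapsto e(WM/P)$ is a homomorphism $\BF_q[t]/P\to\mathbb{C}^{\times}$, and it is nontrivial whenever $P\nmid M$, since $W\mapsto WM\bmod P$ is then a bijection of $\BF_q[t]/P$ while the linear functional ``coefficient of $t^{\deg P-1}$'', which controls $e(W/P)$, is not identically zero. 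Hence the inner sum equals $|P|-1$ if $P\mid Z-1$ and $-1$ otherwise, so only $Z\equiv 1$ contributes a main term and, after cancelling the $-1$'s against $\sum_{Z\bmod P}\chi_P(Z)=0$, one gets $|G(1,\chi_P)|^2=|P|$, i.e.\ $|G(1,\chi_P)|=|P|^{1/2}$.

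The remaining step — showing the argument of $G(1,\chi_P)$ is trivial, i.e.\ that it is $+|P|^{1/2}$ rather than $|P|^{1/2}$ times a root of unity — is the genuinely hard one, being the function-field incarnation of Gauss's sign determination for quadratic Gauss sums, and I expect it to be the main obstacle. I would handle it by identifying $\BF_q[t]/P$ with the finite field $k=\BF_{q^{\deg P}}$, recognizing the additive character $W\mapsto e(W/P)$ as $x\mapsto\exp\!\big(2\pi i\,\tr_{k/\BF_q}(\alpha x)/q\big)$ for a suitable $\alpha\in k^{\times}$, absorbing $\alpha$ by one final substitution so that $G(1,\chi_P)$ becomes $\chi_P(\alpha)$ times the standard quadratic Gauss sum over $k$, and then invoking the Davenport--Hasse relation together with Gauss's classical evaluation over the prime field $\BF_q$; alternatively one simply quotes this evaluation from the literature (the ``Gauss'' of the statement). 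Steps 1--3 are purely formal manipulations, so the entire content of the lemma is concentrated in this last sign computation.
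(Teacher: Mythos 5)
Your reductions are correct, and they already go further than the paper, whose entire proof of this lemma is the citation to \cite[Lemma 3.2]{AFlo}: the case $P\mid V$, the substitution $W\mapsto V^{-1}W$ giving $G(V,\chi_P)=\Big(\frac{V}{P}\Big)G(1,\chi_P)$, and the orthogonality computation $|G(1,\chi_P)|^2=|P|$ are all sound (your observation that $W\mapsto e(WM/P)$ is a nontrivial additive character of $\BF_q[t]/(P)$ whenever $P\nmid M$ is exactly the point needed). You have also correctly located the entire content of the lemma in the sign determination.

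That last step, however, is not just hard --- it is where the statement as written actually fails, so leaving it as a citation is a genuine gap. Carry out your own plan: identify $\BF_q[t]/(P)\cong\BF_{q^n}$ with $n=\deg P$ and $\theta$ a root of $P$. The character $W\mapsto e(W/P)$ reads off the coefficient of $t^{n-1}$ of $W$, which by partial fractions equals $\tr(W(\theta)/P'(\theta))$; hence $G(1,\chi_P)=\chi_P(P'(\theta))\,g_n$, where $g_n$ is the standard quadratic Gauss sum of $\BF_{q^n}$. Davenport--Hasse gives $g_n=-(-g_1)^n$ with $g_1=\sqrt q$ or $i\sqrt q$ according as $q\equiv 1$ or $3\Mod 4$, while $\chi_P(P'(\theta))=\chi_q\big((-1)^{n(n-1)/2}\disc P\big)$ ($\chi_q$ the quadratic character of $\BF_q^{\times}$), and $\disc P$ is a square iff $n$ is odd. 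For $q\equiv 1\Mod 4$ everything cancels and one indeed gets $G(1,\chi_P)=|P|^{1/2}$; but for $q\equiv 3\Mod 4$ and $n$ odd the answer is $\pm i|P|^{1/2}$. The simplest counterexample is $q=7$, $P=t$, where $G(1,\chi_t)=\sum_{c=0}^{6}\big(\tfrac{c}{7}\big)e^{2\pi i c/7}=i\sqrt 7$. The cited reference operates under the standing hypothesis $q\equiv 1\Mod 4$, which this paper does not impose, so either that hypothesis must be added or the lemma restated with the unimodular factor $\epsilon_{n,q}$ made explicit (the latter is harmless downstream, since for fixed $n$ the factor is constant in $P$ and everything is eventually bounded in absolute value). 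In short: your instinct that the sign is the crux is exactly right, and here the sign computation cannot be waved at --- it is obligatory, because it is precisely where the stated generality breaks.
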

\begin{proof}
   It follows from \cite[Lemma 3.2]{AFlo}.
\end{proof}

\section{Poisson Summation Formula}
For $F$ a general periodic function$\pmod f$, define the Fourier transform of $F$ as
$$
\widehat{F}(u; f)=  \sum\limits_{V\Mod f} F(V) e \Big (\frac{uV}{f} \Big ).
$$
The following Poisson summation formula holds. 
\begin{lem}\label{possionformula}
Let $F$ be a periodic function$\pmod f$ such that $f \in \BF_{q}[t]$ is a polynomial of degree $n$ and let $m$ be a positive integer. Then
    \begin{align}\label{poisson}
    \sum\limits_{g \in \CM_{m}}  F(g) = \frac{q^m}{|f|} \sum\limits_{\deg V \leq n-m-1} \widehat{F}(V;f)e \Big (\frac{-Vx^m}{f} \Big).       
    \end{align}
If $V=0$ then $\deg V = -\infty$ by convention and so the above equality holds. 
\end{lem}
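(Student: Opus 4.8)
The plan is to prove the Poisson summation formula by a direct computation with characters on the finite additive group $\BF_q[t]/f\BF_q[t]$, using the exponential function $e(\cdot)$ of Hayes to detect the relevant congruence conditions. First I would note that the polynomials $g \in \CM_m$ of degree $m > n$ are being summed against a function $F$ that only depends on $g \bmod f$; the idea is to group the sum over $\CM_m$ according to the residue class of $g$ modulo $f$ and then to express the indicator of each residue class as an exponential sum. Concretely, for a fixed residue $V_0 \bmod f$, one has the identity
$$
\mathbbm{1}[g \equiv V_0 \Mod f] = \frac{1}{|f|}\sum_{\deg V \leq n-1} e\Big(\frac{V(g-V_0)}{f}\Big),
$$
which is the standard orthogonality relation for the additive characters $g \mapsto e(Vg/f)$ on $\BF_q[t]/f\BF_q[t]$; here $V$ ranges over a complete set of representatives, i.e. polynomials of degree $\le n-1$ (together with $V=0$). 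I would then substitute this into $\sum_{g \in \CM_m} F(g) = \sum_{V_0 \Mod f} F(V_0)\sum_{g \in \CM_m} \mathbbm{1}[g \equiv V_0 \Mod f]$ and interchange the order of summation, so that the inner object becomes $\sum_{g \in \CM_m} e(Vg/f)$ for each $V$.

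The key step is then to evaluate the character sum $S(V) := \sum_{g \in \CM_m} e(Vg/f)$. Writing a monic $g$ of degree $m$ as $g = t^m + g'$ with $\deg g' \le m-1$, we get $S(V) = e(Vt^m/f)\sum_{\deg g' \le m-1} e(Vg'/f)$. The inner sum over all $g'$ of degree $\le m-1$ is a sum of an additive character over an $\BF_q$-subspace, so it is either $q^m$ (if the character is trivial on that subspace) or $0$. Using the definition of $e$ via the coefficient of $1/t$ in the Laurent expansion of $Vg'/f$, the character $g' \mapsto e(Vg'/f)$ is trivial on all $g'$ with $\deg g' \le m-1$ precisely when the Laurent expansion of $V/f$ has vanishing coefficients in the appropriate range — which, after chasing degrees, amounts exactly to the condition $\deg V \le n-m-1$. (When $m \ge n$ this will single out only $V=0$, recovering the naive main term, consistent with the stated convention $\deg 0 = -\infty$.) For the surviving $V$, the prefactor $e(Vt^m/f) = e(-V(-t^m)/f)$ matches the claimed $e(-Vx^m/f)$ after collecting signs, and assembling the pieces gives
$$
\sum_{g \in \CM_m} F(g) = \frac{q^m}{|f|}\sum_{\deg V \le n-m-1} \Big(\sum_{V_0 \Mod f} F(V_0) e\Big(\tfrac{-VV_0}{f}\Big)\Big) e\Big(\tfrac{-Vt^m}{f}\Big),
$$
and recognizing the inner sum as $\widehat{F}(V;f) = \widehat{F}(-V;f)$ after the substitution $V \mapsto -V$ (or just by relabelling) yields the formula as stated, modulo a harmless sign change $V \leftrightarrow -V$ in the index of summation.

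The main obstacle I anticipate is the bookkeeping in the degree analysis of when $g' \mapsto e(Vg'/f)$ is trivial: one must carefully track which coefficients of the Laurent series of $V/f \in \BF_q((1/t))$ control the value of $e(Vg'/f)$ as $g'$ ranges over polynomials of degree $\le m-1$, and verify that triviality on this whole space is equivalent to $\deg V \le n - m - 1$ (so that $\nu(V/f) \ge$ the right threshold). This is where the properties of Hayes's exponential recalled in the excerpt — additivity, $e(A)=1$ for $A \in \BF_q[t]$, and well-definedness modulo $\BF_q[t]$ — get used, and it is essentially the only non-formal input; everything else is orthogonality of characters on a finite abelian group and reindexing.
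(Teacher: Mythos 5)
Your argument is correct and follows essentially the same route as the paper: both invert $F$ via orthogonality of the additive characters $e(V\cdot/f)$, write each monic $g$ as $t^m+u$ with $\deg u\le m-1$, and show that the inner sum over $u$ equals $q^m$ precisely when $\deg V\le n-m-1$ and vanishes otherwise (you get the vanishing from the general fact that a nontrivial character sums to zero over a finite group, while the paper computes it explicitly by splitting the $u$-sum by degree and leading coefficient and invoking Hayes's evaluation of $\sum_{u\in c\CM_i}e(Vu/f)$). The residual sign discrepancy in your final display is indeed repaired by the relabelling $V\mapsto -V$, exactly as in the paper.
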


\begin{proof}
    For any polynomial $g \in \BF_{q}[t]$, we have the following equality using the definition of the Fourier transform, 
\begin{align}\label{observation}
\frac{1}{|f|} \sum\limits_{V\Mod f} \widehat{F}(-V;f)e \Big (\frac{Vg}{f} \Big) = \frac{1}{|f|}\sum\limits_{u\Mod f} F(u) \sum\limits_{V\Mod f} e\Big ( \frac{V(g-u)}{f} \Big).
\end{align}
If $ u \neq g,$ then $\sum\limits_{V\Mod f} e\Big ( \frac{V(g-u)}{f} \Big) = 0,$ since we can find a polynomial $h$ such that $e\Big ( \frac{h(g-u)}{f} \Big) \neq 1$ and then
$$
e\Big ( \frac{h(g-u)}{f} \Big )\sum\limits_{V\Mod f} e\Big ( \frac{V(g-u)}{f} \Big) = \sum\limits_{V\Mod f} e \Big ( \frac{(V+h)(g-u)}{f} \Big ) = \sum\limits_{V\Mod f} e \Big( \frac{V(g-u)}{f}\Big ).
$$
Hence the only non-zero term on the right hand side of \eqref{observation} is determined by $u=g.$ Therefore, we obtain
\begin{align}\label{firstfact}
F(g) = \frac{1}{|f|} \sum\limits_{V\Mod f} \widehat{F}(-V;f)e \Big (\frac{Vg}{f} \Big).
\end{align}

Each $g \in \CM_{m}$ can be written as $g = x^m + u $ with $\deg u \leq m-1.$ Using \eqref{firstfact}, we have 
\begin{align*}
 \sum\limits_{g \in \CM_{m}} F(g) = \frac{1}{|f|} \sum\limits_{V\Mod f} \widehat{F}(-V;f) e \Big ( \frac{Vx^m}{f} \Big) \sum\limits_{\deg u \leq m-1} e\Big( \frac{Vu}{f} \Big) = \CS_1 + \CS_2
 \end{align*}
where 
\begin{align*}
 \CS_1 &= \frac{1}{|f|} \sum\limits_{\deg V \leq n-m-1} \widehat{F}(-V;f) e \Big ( \frac{Vx^m}{f} \Big) \sum\limits_{\deg u \leq m-1} e\Big(\frac{Vu}{f} \Big), \\
 \CS_2 &= \frac{1}{|f|} \sum\limits_{n-m \leq \deg V \leq n-1} \widehat{F}(-V;f) e \Big ( \frac{Vx^m}{f} \Big) \sum\limits_{\deg u \leq m-1} e\Big(\frac{Vu}{f} \Big).
\end{align*}
We first evaluate $\CS_1$. When $\deg V \leq n-m-1$ and $\deg u \leq m-1$, then $e \Big( \frac{Vu}{f} \Big ) = 1.$ Since there are $q^m$ polynomials of degree $ \leq m-1$, it follows under $V \mapsto -V$ that
\begin{align}\label{firstsummand}
\CS_1 = \frac{q^m}{|f|} \sum\limits_{\deg V \leq n-m-1} \widehat{F}(V;f) e \Big ( \frac{-Vx^m}{f} \Big ).
\end{align}
Now we will show that $\CS_2=0.$ We re-write
$$
\sum\limits_{\deg u \leq m-1} e \Big(\frac{Vu}{f} \Big) = 1 + \sum\limits_{i=0}^{m-1}\sum\limits_{c=1}^{q-1}\sum\limits_{u \in c\CM_i} e\Big(\frac{Vu}{f}\Big).
$$
Notice that if $i \leq n-2-\deg V$, then $e \Big (\frac{Vu}{f} \Big )= 1.$ If $ i \geq n - \deg V$, then it follows from \cite[Lemma 3.7]{Hayes1966} that $\sum\limits_{u \in c\CM_i} e\Big( \frac{Vu}{f} \Big ) = 0. $ Since $n - 1 - \deg V \leq m - 1$ in $\CS_2,$ combining all these above
\begin{align*}
\sum\limits_{\deg u \leq m-1} e \Big(\frac{Vu}{f} \Big) &= 1 + \sum\limits_{i=0}^{n-2-\deg V} \sum\limits_{c=1}^{q-1} q^i + \sum\limits_{c=1}^{q-1} 
\sum\limits_{u \in c\CM_{n-1-\deg V}} e\Big (\frac{Vu}{f} \Big ) \\
&= 1 + (q-1) \frac{q^{n-1-\deg V} - 1}{q-1} - q^{n-1-\deg V} \\
&= 0
\end{align*}
Hence $\CS_2 = 0$. Combining this with $\eqref{firstfact}$ and $\eqref{firstsummand},$ completes the proof of $\eqref{poisson}$.
\end{proof}

\begin{cor}\label{non-monicpoisson}
     Let $F$ be a periodic function$\pmod f$ such that $f \in \BF_{q}[t]$ is a polynomial of degree $n$ and let $m$ be a positive integer. Then
    \begin{align}\label{poisson2}
    \sum\limits_{g \in \CH_{m}}  F(g) = \frac{q^m}{|f|} \Big (\sum\limits_{\deg V = n-m-1} -\widehat{F}(V;f) + \sum\limits_{\deg V < n-m-1} (q-1)\widehat{F}(V;f) \Big ).       
    \end{align}
If $V=0$ then $\deg V = -\infty$ by convention and so the above equality holds. 
\end{cor}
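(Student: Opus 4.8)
The plan is to deduce \eqref{poisson2} by the same method as Lemma~\ref{possionformula}, the only new ingredient being the bookkeeping of the non-monic leading coefficient. First I would decompose $\CH_{m}$ according to leading coefficient: every $g\in\CH_{m}$ is uniquely of the form $g=cx^{m}+u$ with $c\in\BF_{q}^{\times}$ and $\deg u\le m-1$. Applying the pointwise identity \eqref{firstfact} to each $F(g)$, summing, and factoring $e(Vg/f)=e(Vcx^{m}/f)\,e(Vu/f)$ yields
$$
\sum_{g\in\CH_{m}}F(g)=\frac{1}{|f|}\sum_{V\Mod f}\widehat{F}(-V;f)\Big(\sum_{c=1}^{q-1}e\Big(\tfrac{Vcx^{m}}{f}\Big)\Big)\Big(\sum_{\deg u\le m-1}e\Big(\tfrac{Vu}{f}\Big)\Big).
$$

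Next I would reuse the two sub-computations already carried out inside the proof of Lemma~\ref{possionformula}: the inner sum over $u$ equals $q^{m}$ when $\deg V\le n-m-1$ (this is the evaluation of $\CS_{1}$) and vanishes when $n-m\le\deg V\le n-1$ (this is the identity $\CS_{2}=0$). Choosing representatives $V$ with $\deg V\le n-1$, only the range $\deg V\le n-m-1$ survives, so
$$
\sum_{g\in\CH_{m}}F(g)=\frac{q^{m}}{|f|}\sum_{\deg V\le n-m-1}\widehat{F}(-V;f)\sum_{c=1}^{q-1}e\Big(\tfrac{Vcx^{m}}{f}\Big).
$$

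The genuinely new calculation --- and the step I expect to demand the most care --- is evaluating $\sum_{c=1}^{q-1}e(Vcx^{m}/f)$ for $\deg V\le n-m-1$. I would set $\beta=Vx^{m}/f$ and observe that, since $\deg(Vx^{m})\le n-1<\deg f$, the Laurent expansion \eqref{expansion} of $\beta$ has valuation $\nu(\beta)=n-m-\deg V\ge 1$, so its coefficient $\beta_{1}$ of $1/t$ is the first one allowed to be nonzero. If $\deg V<n-m-1$ (in particular if $V=0$), then $\nu(\beta)\ge 2$, hence $\beta_{1}=0$, hence $e(c\beta)=1$ for every $c$ and the sum equals $q-1$. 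If $\deg V=n-m-1$, then $\nu(\beta)=1$ and $\beta_{1}$ is the ratio of the two nonzero leading coefficients of $Vx^{m}$ and of $f$, so $\beta_{1}\in\BF_{q}^{\times}$; here I use that $q$ is prime, so that $\BF_{q}\cong\Z/q\Z$ and multiplication by $\beta_{1}$ permutes $\BF_{q}^{\times}$, giving $\sum_{c=1}^{q-1}e(c\beta)=\sum_{k=1}^{q-1}e^{2\pi ik/q}=-1$.

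Finally I would substitute these two values, split $\sum_{\deg V\le n-m-1}$ into the piece $\deg V=n-m-1$ (carrying the factor $-1$) and the piece $\deg V<n-m-1$ (carrying the factor $q-1$), and then replace $V$ by $-V$; this preserves degrees and turns $\widehat{F}(-V;f)$ into $\widehat{F}(V;f)$, producing exactly \eqref{poisson2}. Apart from keeping the valuation of $\beta$ straight and tracking which degree range picks up which constant, I do not anticipate any real obstacle.
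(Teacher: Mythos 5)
Your proposal is correct and follows essentially the same route as the paper: decompose $g=cx^{m}+u$, apply \eqref{firstfact}, kill the range $n-m\le\deg V\le n-1$ as in Lemma~\ref{possionformula}, and evaluate $\sum_{c\in\BF_q^{\times}}e(Vcx^{m}/f)$ as $q-1$ or $-1$ according to whether $\deg V<n-m-1$ or $\deg V=n-m-1$. Your valuation argument for that last character sum is in fact more detailed than the paper's, which simply asserts the two values.
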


\begin{proof}
Each $g \in \CH_{m}$ can be written as $g = ax^m + u $ where $a \in \BF_{q}^{\times}$ and $\deg u \leq m-1.$ Using \eqref{firstfact} above, 
\begin{align*}
 \sum\limits_{g \in \CH_{m}} F(g) &= \frac{1}{|f|} \sum_{a \in \BF_{q}^{*}} \Big ( \sum\limits_{V\Mod f} \widehat{F}(-V;f) e \Big ( \frac{Vax^m}{f} \Big) \Big ) \sum\limits_{\deg u \leq m-1} e\Big( \frac{Vu}{f} \Big) \\
 &= \CS_1 + \CS_2
 \end{align*}
where 
\begin{align*}
 \CS_1 &= \frac{1}{|f|} \sum_{a \in \BF_{q}^{*}} \Big ( \sum\limits_{\deg V \leq n-m-1} \widehat{F}(-V;f) e \Big ( \frac{Vax^m}{f} \Big) \Big )\sum\limits_{\deg u \leq m-1} e\Big(\frac{Vu}{f} \Big), \\
 \CS_2 &= \frac{1}{|f|} \sum_{a \in \BF_{q}^{*}} \Big (\sum\limits_{n-m \leq \deg V \leq n-1} \widehat{F}(-V;f) e \Big ( \frac{Vax^m}{f} \Big) \Big ) \sum\limits_{\deg u \leq m-1} e\Big(\frac{Vu}{f} \Big).
\end{align*}
With an argument similar to the proof of Lemma \ref{poisson}, we see that $\CS_2 = 0$. It remains to evaluate $\CS_1$. When $\deg V \leq n-m-1$ and $\deg u \leq m-1$, then $e \Big( \frac{Vu}{f} \Big ) = 1.$ Since there are $q^m$ polynomials of degree $ \leq m-1$, it follows under $V \mapsto -V$ that
\begin{align}\label{firstsummand2}
\CS_1 = \frac{q^m}{|f|} \sum_{a \in \BF_{q}^{*}} \Big (\sum\limits_{\deg V \leq n-m-1} \widehat{F}(V;f) e \Big ( \frac{-Vax^m}{f} \Big ) \Big ).
\end{align}
Notice that if $\deg V < n-m-1$, then $e \Big ( \frac{-Vax^m}{f} \Big ) = 1$. If $\deg V = n-m-1$, then $\sum\limits_{a \in \BF_{q}^{*}} e \Big ( \frac{-Vax^m}{f} \Big ) = -1$ and hence
$$
   \CS_1 = \frac{q^m}{|f|} \Big (\sum\limits_{\deg V = n-m-1} -\widehat{F}(V;f) + \sum\limits_{\deg V < n-m-1} (q-1)\widehat{F}(V;f) \Big ).
$$
which completes the proof. 
\end{proof}

\section{The Main Estimate}
For a non-principal character $\chi$ with conductor $h$, the Dirichlet $L$-function associated to $\chi$ is defined by 
\[
\CL(u,\chi) = \sum_{f \in \CM} \chi(f) u^{\deg f} = \prod_{P} (1-\chi(P)u^{\deg P})^{-1}
\]
where the summation is over all monic polynomials in $\BF_{q}[t]$ and the product is over all monic prime polynomials in $\BF_{q}[t]$. We will need the following result to bound the size of $L$-functions. Note that the result holds for characters of any order. 

\begin{lem}\label{boundingL}
    Let $\chi$ be a primitive character of conductor $h$ defined over $\BF_{q}[t]$. Then, for $|u| \leq \frac{1}{\sqrt{q}}$ and for all $\varepsilon > 0$, 
    $$
| \CL(u, \chi) | \ll_{\varepsilon} q^{\varepsilon \deg h}.
$$
In fact, if $\chi$ is a character of period $h$, then 
$$
| \CL(u, \chi) | \ll_{\varepsilon} ( 16 q^{\varepsilon})^{\deg h}.
$$
\end{lem}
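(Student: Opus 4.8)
The plan is to prove the primitive bound via the function-field form of the classical implication ``Riemann Hypothesis $\Rightarrow$ Lindel\"of'', and to deduce the imprimitive bound by peeling off Euler factors; since $q$ is fixed, implied constants may depend on $q$ and $\varepsilon$. First I reduce to the primitive case: if $\chi$ has period $h$ and is induced by the primitive character $\chi^{*}$ of conductor $h^{*}\mid h$, then $\CL(u,\chi)=\CL(u,\chi^{*})\prod_{P\mid h}(1-\chi^{*}(P)u^{\deg P})$, and on $|u|\le q^{-1/2}$ each of the at most $\deg h$ extra factors has absolute value $\le 1+q^{-1/2}\le 2$; thus a bound $|\CL(u,\chi^{*})|\ll_{\varepsilon}q^{\varepsilon\deg h^{*}}$ gives $|\CL(u,\chi)|\ll_{\varepsilon}2^{\deg h}q^{\varepsilon\deg h}\le(16q^{\varepsilon})^{\deg h}$. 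So assume $\chi$ primitive and write $N:=\deg\CL(u,\chi)\le\deg h-1$. By the Riemann Hypothesis over function fields, $\CL(u,\chi)=\prod_{j=1}^{N}(1-\alpha_{j}u)$ with $|\alpha_{j}|=q^{1/2}$; hence $\CL(u,\chi)$ is zero-free on $|u|<q^{-1/2}$ and equals $1$ at $u=0$, so $\log\CL(u,\chi)=\sum_{n\ge1}b_{n}u^{n}$ is holomorphic there.

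I would then record two estimates for $b_{n}$. From the Euler product, $b_{n}=\sum_{k\deg P=n}\chi(P)^{k}/k$, so $|b_{n}|\le\frac1n\sum_{d\mid n}q^{d}\le 2q^{n}/n$ (the ``short sum'' bound). From the factorization, $b_{n}=\frac1n\sum_{j}\alpha_{j}^{n}$, so $|b_{n}|\le Nq^{n/2}/n$ (the ``explicit formula'' bound, used for large $n$). Now fix a small $\eta\in(0,\tfrac12]$, set $\rho=q^{-1/2}(1-\eta)$ and $X=\lfloor 2\log_{q}N\rfloor$. For $|u|=\rho$, using $q^{n}\rho^{n}\le q^{n/2}$ when $n\le X$ and $q^{n/2}\rho^{n}=(1-\eta)^{n}$ when $n>X$,
\[
\log|\CL(u,\chi)|\le\sum_{n\le X}\frac{2q^{n}\rho^{n}}{n}+\sum_{n>X}\frac{Nq^{n/2}\rho^{n}}{n}\ll\frac{q^{X/2}}{X}+\frac{N}{\eta X}\ll\frac{N\log q}{\eta\log N},
\]
since the first sum is dominated by its last term and $q^{X/2}\le N$, $X\gg\log_{q}N$. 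To reach the circle $|u|=q^{-1/2}$ I use that $\CL(u,\chi)=\sum_{n=0}^{N}c_{n}u^{n}$ is a polynomial of degree $N$: Cauchy's inequality on $|u|=\rho$ gives $|c_{n}|\le\rho^{-n}\max_{|u|=\rho}|\CL(u,\chi)|$, whence for $|u|=q^{-1/2}$,
\[
|\CL(u,\chi)|\le\Bigl(\max_{|u|=\rho}|\CL(u,\chi)|\Bigr)\sum_{n=0}^{N}(1-\eta)^{-n}\ll\tfrac1\eta\,e^{2\eta N}\max_{|u|=\rho}|\CL(u,\chi)|,
\]
so that $\log|\CL(u,\chi)|\ll\eta N+\tfrac{N\log q}{\eta\log N}+\log\tfrac1\eta$ on $|u|=q^{-1/2}$.

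To finish, given $\varepsilon>0$ I choose $\eta$ a sufficiently small positive quantity depending only on $\varepsilon$ and $q$, so that $\eta N\le\tfrac\varepsilon2 N\log q$; then the middle term is also $\le\tfrac\varepsilon2 N\log q$ once $\deg h$ exceeds a constant $c(\varepsilon,q)$, giving $|\CL(u,\chi)|\ll_{\varepsilon}q^{\varepsilon N}\le q^{\varepsilon\deg h}$. For the finitely many primitive $\chi$ over $\BF_{q}$ with $\deg h\le c(\varepsilon,q)$, the trivial bound $|\CL(u,\chi)|\le\sum_{n<\deg h}q^{n/2}\ll_{\varepsilon}1$ suffices. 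This proves the primitive bound, and the imprimitive bound follows by the first step.

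The step I expect to be the main obstacle is the passage to the boundary circle $|u|=q^{-1/2}$: this is exactly the boundary of the disc of holomorphy of $\log\CL(u,\chi)$, the series $\sum b_{n}u^{n}$ diverges there, and the estimate on $|u|=\rho$ degrades like $1/\eta$ as $\rho\to q^{-1/2}$. Exploiting that $\CL(u,\chi)$ is a polynomial of degree $\le\deg h$ to transfer the estimate inward (at the cost of the factor $e^{O(\eta N)}$) and then balancing $\eta$ against $\varepsilon$ is the crux; the remaining short-sum/tail split is routine, as is the elementary reduction to primitive characters.
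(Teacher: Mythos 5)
Your argument is correct, and for the main (first) assertion it takes a genuinely different route from the paper: the paper simply cites the function-field Lindel\"of bound for primitive characters (Theorem 20 of the Bucur--Costa--David--Guerreiro--Lowry-Duda reference) as a black box, whereas you reprove it from Weil's Riemann Hypothesis. Your derivation --- the two bounds $|b_n|\le 2q^n/n$ (from the Euler product and the prime-counting bound $\pi_q(d)\le q^d/d$) and $|b_n|\le Nq^{n/2}/n$ (from the zeros), the split of $\log\CL$ at $X\asymp\log_q N$ on the circle $|u|=q^{-1/2}(1-\eta)$, and the Cauchy-coefficient trick exploiting that $\CL(u,\chi)$ is a polynomial of degree $N\le\deg h-1$ to push the estimate out to the boundary circle at the cost of $e^{O(\eta N)}/\eta$ --- is sound, and your explicit handling of the boundary $|u|=q^{-1/2}$ is a point the black-box citation hides; the small-$N$ degeneracy ($X=0$, $\log N=0$) is correctly absorbed into the finitely many characters with $\deg h\le c(\varepsilon,q)$, where the trivial bound $\sum_{n<\deg h}q^{n/2}$ applies. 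For the second (imprimitive) assertion your argument is essentially the paper's: both peel off the Euler factors at primes dividing $h$; you bound each factor directly by $1+q^{-1/2}\le 2$, which is cleaner than the paper's bound $(1-1/\sqrt q)^{-2\deg h}\le 16^{\deg h}$ and in fact yields the sharper constant $2$ in place of $16$ (of course still $\le 16$, so the stated inequality holds). The only caveat, shared with the paper's own formulation, is that both parts implicitly require $\chi$ non-principal (equivalently $\chi^{*}$ non-trivial), since otherwise $\CL(u,\chi^{*})$ is the zeta function with a pole at $u=q^{-1}$ inside the disc; this is harmless because the lemma is only applied to non-principal characters.
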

\begin{proof}
    This is the Lindel{\"o}f hypothesis in function fields. See \cite[Theorem 20]{BUCUR_COSTA_DAVID_GUERREIRO_LOWRY–DUDA_2018} for the proof of the first part of the statement. 
    
    To prove the second part, assume that $\chi$ is a character of period $h$. Let $\chi = \chi_0 \chi_1 $ where $\chi_0$ is the principal character$\Mod h$ and $\chi_1$ is a primitive character$\Mod f$ with $f | h$. Since we have
    $$
\CL(u,\chi) = \prod_{P} ( 1- \chi(P)u^{\deg P})^{-1} \ \ \ \ \ \ \  \CL(u,\chi_1) = \prod_{P} (1 - \chi_1(P)u^{\deg P})^{-1}
    $$
and $\chi_0(P) = 1$ unless $P | h$, then $\CL(u,\chi)$ differs from $\CL(u,\chi_1)$ only on the primes $P$ with $P | h$. Therefore,
$$
\frac{\CL(u, \chi)}{\CL(u,\chi_1)} = \prod_{P | h} \frac{1-\chi_1(P)u^{\deg P}}{1  - \chi(P) u^{\deg P}}. 
$$
Applying the first part to $\chi_1$ allows us to set a bound
$$
| \CL(u,\chi) | = |\CL(u,\chi_1)| \Big |\frac{\CL(u, \chi)}{\CL(u,\chi_1)} \Big | \ll_{\varepsilon}  q^{\varepsilon \deg f} \Big ( 1 - \frac{1}{\sqrt{q}} \Big )^{- 2\deg h}
$$
Since $\deg f \leq \deg h$ and $(1 - 1/\sqrt{q})^{-1} \leq 4$, the proof is complete.
\end{proof}

\begin{lem}\label{sumofchi}
    Let $W \in \BF_q[t]$ be a polynomial of degree $l$ and $\chi$ a non-principal character$\Mod W$.  Then
    \begin{align*}
        \Big | \sum_{\substack{ V \in  \CH_{k} }} \chi(V) \Big | \ll_{\varepsilon} 16^l q^{\frac{k}{2} + 1  + \varepsilon l} \ \ \ \ \text{for any $\varepsilon > 0 $}.
    \end{align*}
\end{lem}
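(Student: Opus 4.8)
The plan is to reduce the sum over all degree-$k$ polynomials to a sum over \emph{monic} ones, and then to recognise that monic sum as a single Taylor coefficient of the Dirichlet $L$-function $\CL(u,\chi)$, which Lemma~\ref{boundingL} controls. First I would split $\CH_{k}$ by leading coefficient: the map $\BF_q^\times \times \CM_k \to \CH_k$, $(c,g)\mapsto cg$, is a bijection, and since $\chi$ is multiplicative and every nonzero constant is a unit modulo $W$, one has $\chi(cg)=\chi(c)\chi(g)$ for all $g$ (both sides being $0$ when $\gcd(g,W)\neq 1$). Hence
$$
\sum_{V \in \CH_k}\chi(V) \;=\; \Big(\sum_{c \in \BF_q^\times}\chi(c)\Big)\Big(\sum_{g \in \CM_k}\chi(g)\Big),
$$
and the first factor has modulus $\leq q-1$ (it is $q-1$ or $0$ according as $\chi$ is or is not trivial on $\BF_q^\times$).

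It then remains to bound $\big|\sum_{g\in\CM_k}\chi(g)\big|$. This quantity is precisely the coefficient of $u^k$ in $\CL(u,\chi)=\sum_{f\in\CM}\chi(f)u^{\deg f}$, and since $\chi$ is non-principal, $\CL(u,\chi)$ is a polynomial in $u$ (of degree $\le \deg W-1$), in particular entire. Applying Cauchy's integral formula on the circle $|u|=q^{-1/2}$ gives
$$
\Big|\sum_{g\in\CM_k}\chi(g)\Big| \;\le\; q^{k/2}\sup_{|u|=q^{-1/2}}|\CL(u,\chi)|,
$$
and then the second (``period $h$'') statement of Lemma~\ref{boundingL}, applied with $h=W$ and $\deg W=l$, bounds the supremum by $\ll_\varepsilon (16q^\varepsilon)^l=16^l q^{\varepsilon l}$. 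Multiplying by the earlier factor $q-1<q$, we obtain
$$
\Big|\sum_{V\in\CH_k}\chi(V)\Big| \;\ll_\varepsilon\; q\cdot 16^l q^{k/2+\varepsilon l} \;=\; 16^l q^{k/2+1+\varepsilon l},
$$
as claimed.

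I do not expect a serious obstacle. The points requiring the most care are: the termwise identity $\chi(cg)=\chi(c)\chi(g)$ together with its degenerate cases; the observation that one should invoke Lemma~\ref{boundingL} in its ``period $h$'' form, so that no reduction to the primitive character inducing $\chi$ is required; and the fact that the Dirichlet series defining $\CL(u,\chi)$ converges only for $|u|<1/q$, so that the polynomiality of $\CL(u,\chi)$ (valid since $\chi$ is non-principal) is genuinely needed in order to apply Cauchy's estimate out at radius $q^{-1/2}$.
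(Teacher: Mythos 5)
Your proof is correct and follows essentially the same route as the paper: the same factorization of the sum over $\CH_k$ into the sum over leading coefficients times the sum over $\CM_k$, the same contour integral (the paper calls it Perron's formula, you call it Cauchy's formula -- they coincide here) at radius $q^{-1/2}$, and the same appeal to the ``period $h$'' form of Lemma~\ref{boundingL}. Your added remarks on the degenerate cases of multiplicativity and on why polynomiality of $\CL(u,\chi)$ is needed to integrate outside the disc of convergence are correct refinements of details the paper leaves implicit.
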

\begin{proof}
    For any $V \in \CH_k$, there is a unique monic polynomial $f\in \BF_q[t]$ and non-zero $a \in \BF_q$ such that $V=af$. Therefore 
    \begin{align}\label{writingmonic}
    \sum_{V \in \CH_k} \chi(V) = \sum_{a \neq 0} \chi(a) \sum_{V \in \CM_k} \chi(V).
    \end{align}    
    Using Perron's formula we have
    \begin{align*}
        \sum\limits_{ V \in \CM_{k}} \chi(V) = \frac{1}{2\pi i} \oint\limits_{|u|=r} \frac{\CL(u,\chi)}{u^k} \frac{\mathrm{d}u}{u} 
    \end{align*}
where we integrate along a circle with radius $r=q^{-1/2}$. By Lemma \ref{boundingL}, we have
\begin{align*}
    \Big | \frac{1}{2\pi i} \oint\limits_{|u|=r} \frac{\CL(u,\chi)}{u^k} \frac{\mathrm{d}u}{u} \Big| \ll_{\varepsilon} \frac{(16 q^{\varepsilon})^{l}}{2\pi} \oint\limits_{|u|=r} \Big | \frac{1}{u^k} \Big | \Big |\frac{\mathrm{d}u}{u} \Big | \ll_{\varepsilon} 16^{l} q^{\frac{k}{2} + \varepsilon l }
\end{align*}
Then it follows from \eqref{writingmonic} that
\begin{align*}
\Big | \sum_{\substack{ V \in  \CH_{k} }} \chi(V) \Big | \ll_{\varepsilon} 16^{l} q^{\frac{k}{2}+1 + \varepsilon l}
\end{align*}
for any $\varepsilon > 0$, as desired. 
\end{proof}

Before proceeding further, note that
$\CL(u, \chi)$ has a ``trivial'' zero at $u=1$ when $\chi$ is even. Thus
\[ \CL(u,\chi)= (1-u)^{\lambda}\CL^{*}(u,\chi), \ \ \ \lambda=
    \begin{dcases}
        0 & \text{if}\ \chi \ \text{is odd} \\
        1 & \text{if}\ \chi \ \text{is even} \\
    \end{dcases}
\]
where $\CL^{*}(u,\chi)$  is a polynomial of degree 
$\delta = \deg h - 1 - \lambda$. 
\begin{lem}\label{sumofchioverprimes}
    Given $W \in \BF_{q}[t]$ of degree $l$ and a non-principal character $\chi \Mod{W}$, we have
    \begin{align*}
        \Big | \sum_{\substack{P \in \CM_{n} \\ P \ \text{prime} }} \frac{\chi(P) }{|P|^{1/2}} \Big |  \ll_{\varepsilon} q^{\varepsilon l} \ \ \ \ \ \text{for any $\varepsilon > 0$}.
    \end{align*}
\end{lem}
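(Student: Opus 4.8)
The plan is to express the prime sum in terms of the logarithmic derivative of $\CL(u,\chi)$ and then use the fact that the relevant $L$-function is a polynomial whose zeros all lie on the circle $|u| = q^{-1/2}$ (the Riemann Hypothesis for curves, which holds unconditionally over function fields). Writing $\chi = \chi_0\chi_1$ with $\chi_1$ primitive of conductor $f \mid W$ as in the proof of Lemma~\ref{boundingL}, one has $\CL(u,\chi_1) = (1-u)^{\lambda}\CL^{*}(u,\chi_1)$ with $\CL^{*}$ a polynomial of degree $\delta = \deg f - 1 - \lambda \leq l$, and the Euler product gives
\[
-u\frac{\CL'}{\CL}(u,\chi_1) = \sum_{n \geq 1}\Big(\sum_{\substack{P \in \CM_n \\ P\ \text{prime}}} (\deg P)\,\chi_1(P)^{k}\Big) u^{n}
\]
after expanding $\log(1-\chi_1(P)u^{\deg P})$; more cleanly, I would work with $\sum_P \sum_{j\geq 1} (\deg P)\chi_1(P)^j u^{j\deg P}$ and isolate the $j=1$, total-degree-$n$ contribution by a contour integral $\frac{1}{2\pi i}\oint_{|u|=r} u^{-n}\,\mathrm{d}u/u$. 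Factoring $\CL^{*}(u,\chi_1) = \prod_{i=1}^{\delta}(1-\alpha_i u)$ with $|\alpha_i| = \sqrt{q}$, the coefficient of $u^n$ in $-u\frac{(\CL^*)'}{\CL^*}$ is $-\sum_i \alpha_i^{n}$, of size at most $\delta\, q^{n/2} \leq l\, q^{n/2}$; the $(1-u)^{\lambda}$ factor contributes a harmless $\lambda \leq 1$. Dividing by $|P|^{1/2} = q^{n/2}$ in the original sum is exactly what cancels the $q^{n/2}$, leaving a bound $O(l)$, which is $O_{\varepsilon}(q^{\varepsilon l})$.

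The key steps, in order: (i) reduce from $\chi$ to its primitive inducing character $\chi_1$, noting that the prime sum over $P\mid W$ not counted by $\chi_1$ versus $\chi$ differs by at most $\sum_{P\mid h}(\deg P)|P|^{-1/2} = O(\log|h|) = O(l)$, again absorbed into $q^{\varepsilon l}$; (ii) relate $\sum_{P\in\CM_n, \text{prime}} \chi_1(P)|P|^{-1/2}$, via Perron/contour integration against $-u\,\CL'/\CL(u,\chi_1)$, to the power sums of the inverse roots of $\CL^{*}(u,\chi_1)$, taking care that the higher prime-power terms ($j \geq 2$) contribute only to coefficients with $j \deg P = n$ and are each $O(q^{-n/2}\cdot n)$ in total, hence negligible after the $|P|^{-1/2}$ normalization; (iii) invoke RH (the Weil bound: all $\alpha_i$ have $|\alpha_i| = \sqrt q$) to get $|\sum_i \alpha_i^n| \leq \delta \sqrt{q}^{\,n}$; (iv) collect: the sum is $\leq (\delta + O(1))\, q^{n/2}/q^{n/2} = O(l) \ll_\varepsilon q^{\varepsilon l}$.

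The main obstacle I anticipate is bookkeeping the prime-power terms and the switch between $\chi$ and $\chi_1$: one must check that extracting the "$j=1$, degree exactly $n$" piece from $\log \CL(u,\chi_1)^{-1}$ does not pick up uncontrolled contributions from $j\geq 2$ (these are fine because $\sum_{j\geq 2}\sum_{P:\,j\deg P=n}(\deg P) q^{-n/2} \ll n\,q^{-n/2}$ is summable and tiny), and that passing to $\chi_1$ only perturbs things by a sum over $P\mid h$ of size $O(\log|h|)$. A secondary subtlety: if $\chi$ has even order, $\chi_1^{k}$ for the relevant exponent could be principal — but the hypothesis is that $\chi$ itself is non-principal$\pmod W$, so $\CL(u,\chi)$ is genuinely a polynomial and the argument goes through; one should state this reduction carefully. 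None of these requires new input beyond Lemma~\ref{boundingL}'s ingredients and the Weil bound, so the estimate follows.
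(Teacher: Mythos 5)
Your proposal is correct and follows essentially the same route as the paper: express the prime sum via the logarithmic derivative of $\CL(u,\chi)$, invoke the Weil/Riemann Hypothesis bound $|\alpha_i|=\sqrt q$ on the $\delta\le l$ inverse roots of $\CL^*$, and note that the prime-power ($j\ge 2$) terms are $O(q^{n/2})$ so that after dividing by $|P|^{1/2}=q^{n/2}$ everything is $O(l)\ll_\varepsilon q^{\varepsilon l}$ (the paper packages the power sum as $\tr(\Theta^n)$ for a unitary $\Theta$, which is the same estimate). Your explicit reduction to the primitive inducing character $\chi_1$ is in fact slightly more careful than the paper's argument, which applies RH to $\CL^*(u,\chi)$ directly without noting that imprimitive $L$-functions carry extra Euler factors whose zeros lie on $|u|=1$ rather than $|u|=q^{-1/2}$; this does not affect the bound since those factors only involve primes dividing $W$.
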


\begin{proof}
    The Riemann Hypothesis for curves over function fields \cite{Weil} asserts that all zeroes of $\CL^{*}(u,\chi)$ are on the circle $|u|=q^{-1/2}$. 
    Thus, we may express
    $$
    \CL^{*}(u,\chi) = \det ( 1-u\sqrt{q} \Theta )
    $$
for a $\delta \times \delta$ unitary matrix  $\Theta$. By taking a logarithmic derivative of the identity
$$
\det ( 1-u\sqrt{q} \Theta ) = (1-u)^{-\lambda} \prod_{P} (1-\chi(P)u^{\deg P})^{-1}
$$
which comes from writing $\CL^{*}(u,\chi) = (1-u)^{-\lambda}\CL(u,\chi)$, we find
$$
-\tr(\Theta^n) = \frac{\lambda}{q^{n/2}} + \frac{1}{q^{n/2}} \sum\limits_{f \in \CH_{n}} \Lambda(f)\chi(f)
$$
where $\Lambda$ is the von Mangoldt function. It then follows that
$$
 \Big | \sum_{\substack{P \in \CM_{n} \\ P \ \text{prime} }} \frac{\chi(P) }{|P|^{1/2}} \Big | \ll \Big | \frac{\tr (\Theta^{n}) -  \lambda/q^{n/2} } {n} \Big | \ll \frac{\deg h}{n} \ll \frac{l}{n} \ll_{\varepsilon} q^{\varepsilon l}
$$
for any $\varepsilon > 0$, as desired.
\end{proof}

\begin{prop}\label{mainbound}
    Let $n,k,l$ be positive integers. Let $a \in \mathbb{F}_q^{\times}$. Then
    \begin{align*}
      \Big |  \sum_{\substack{P \in \CM_{n} \\ P \ \text{prime}}} \frac{1}{|P|^{1/2}} \Big (\sum_{\substack{V \in \mathcal{M}_k \\ \deg W =l}} - \Big (\frac{W}{P} \Big ) e\Big ( \frac{(a V)^3\overline{P}}{W^2} \Big ) \Big) \Big | \ll_\varepsilon q^{\varepsilon l } ( q^{k+2}+ 16^{l} q^{\frac{n}{2} + 2 + \frac{k}{2}} + 16^{l} q^{2l + 2+ \frac{k}{2}}  )     
    \end{align*} 
for any $\varepsilon>0$.
\end{prop}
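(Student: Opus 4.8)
The plan is to peel off the quadratic symbol by reciprocity and the additive exponential by completion, so that the surviving sums are character sums over primes and over polynomials, to which Lemmas~\ref{sumofchioverprimes} and~\ref{sumofchi} apply. First, by the reciprocity law one writes $\bigl(\tfrac{W}{P}\bigr)=\eta\,\chi_{W_{0}}(P)$, where $W_{0}$ is the monic associate of $W$, $\chi_{W_{0}}=\bigl(\tfrac{\cdot}{W_{0}}\bigr)$ is a quadratic character of modulus $W_{0}$, and $\eta\in\{\pm1\}$ depends only on $n$ and the leading coefficient of $W$; when $P\mid W$ the summand already vanishes, so this incurs no loss. After this substitution the summand, as a function of $P$, is $\eta\,\chi_{W_{0}}(P)\,e\bigl(\tfrac{(aV)^{3}\overline{P}}{W^{2}}\bigr)$, which is periodic modulo $W^{2}$ and supported on $(P,W)=1$.

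Next, for fixed $W$ and fixed $P$ with $(P,W)=1$, I would apply the Poisson formula of Lemma~\ref{possionformula} to the inner sum over $V\in\CM_{k}$ of the function $V\mapsto e\bigl(\tfrac{a^{3}\overline{P}V^{3}}{W^{2}}\bigr)$, which has period $W^{2}$ of degree $2l$; this writes it as $\tfrac{q^{k}}{|W|^{2}}\sum_{\deg Y\le 2l-k-1}\mathcal{K}(P,Y)\,e\bigl(\tfrac{-Yx^{k}}{W^{2}}\bigr)$ with complete cubic sums $\mathcal{K}(P,Y)=\sum_{U\bmod W^{2}}e\bigl(\tfrac{a^{3}\overline{P}U^{3}+YU}{W^{2}}\bigr)$ (only $Y=0$ survives when $k\ge 2l$). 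In each $\mathcal{K}(P,Y)$ I would substitute $U\mapsto PU$, a bijection of $\BF_{q}[t]/W^{2}$ since $(P,W)=1$; using $P\overline{P}\equiv 1\pmod{W^{2}}$ this removes the inverse, giving $\mathcal{K}(P,Y)=\sum_{U\bmod W^{2}}e\bigl(\tfrac{a^{3}P^{2}U^{3}+YPU}{W^{2}}\bigr)$, whose dependence on $P$ is now polynomial rather than through $\overline{P}$ — the manoeuvre that makes the subsequent sum over primes tractable.

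The frequency $Y=0$ produces the main term. By the Chinese remainder theorem together with the elementary evaluation $\sum_{U\bmod Q^{2}}e(cU^{3}/Q^{2})=|Q|$ for any prime $Q$ and any $c$ coprime to $Q$ (valid because $\mathrm{char}\,\BF_{q}\neq 3$, which is one place the hypothesis $q\ge 5$ enters), the sum $\mathcal{K}(P,0)$ equals $|W|=q^{l}$ whenever $W$ is squarefree, and it is \emph{independent of $P$}. Hence the $Y=0$ contribution, after summing over the primes $P$ of degree $n$ with weight $\chi_{W_{0}}(P)/|P|^{1/2}$, is up to sign $q^{k-2l}|W|\sum_{P}\chi_{W_{0}}(P)/|P|^{1/2}$, which Lemma~\ref{sumofchioverprimes} bounds by $\ll_{\varepsilon}q^{k-l+\varepsilon l}$ for each squarefree $W$ with $\chi_{W_{0}}$ non-principal. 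Summing over the $\ll q^{l}$ polynomials $W$ of degree $l$, and treating separately the non-squarefree $W$ (where $\mathcal{K}(P,0)$ is larger but there are correspondingly fewer of them) and the exceptional $W$ with $\chi_{W_{0}}$ principal, collapses to the main term $q^{\varepsilon l}q^{k+2}$; the factor $q^{2}$ absorbs the passage between monic and arbitrary-degree ranges and the $\BF_{q}^{\times}$-orbit factors that appear along the way.

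For the remaining frequencies $Y\neq 0$ (which occur only if $k<2l$) I would bound $\mathcal{K}(P,Y)$ by the stationary-phase/Weil estimate $|\mathcal{K}(P,Y)|\ll_{\varepsilon}q^{l+\varepsilon l}$ for squarefree $W$ (at most two stationary points per prime factor of $W$) and trivially otherwise, and then estimate the leftover sum over $P,U,Y$ and $W$ in two complementary ways: either expand the quadratic-in-$P$ phase $e(a^{3}U^{3}P^{2}/W^{2})$ into additive characters by a quadratic Gauss sum and invoke Lemma~\ref{sumofchioverprimes} to save $q^{n/2}$ over the trivial estimate $\sum_{P}|P|^{-1/2}\ll q^{n/2}$ of the prime sum — this yields the term $16^{l}q^{n/2+2+k/2}$ — or keep the prime sum trivially bounded and instead recognise and estimate a quadratic character sum of the shape $\sum_{V\in\CM_{k}}\chi(V)$ via Lemma~\ref{sumofchi}, whose bound carries the factor $16^{l}$ and produces the term $16^{l}q^{2l+2+k/2}$ (the $q^{2l}$ being the trivial count of residues modulo $W^{2}$). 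I expect the main obstacle to be exactly the complete cubic sums $\mathcal{K}(P,Y)$ for \emph{non-squarefree} $W$: along the repeated prime factors of $W$ the cubic form $a^{3}P^{2}U^{3}+YPU$ degenerates and the naive stationary-phase count fails, so one must argue directly that this degenerate part is small enough to be swept into the $q^{k+2}$ main term rather than contaminating the error terms; a secondary difficulty is the bookkeeping that decides, in each regime of $(n,k,l)$, whether to spend square-root cancellation on the prime variable (producing $q^{n/2}$) or on the polynomial variable $V$ (producing $q^{k/2}$), so as not to overshoot either of the two stated error terms. The characteristic being $\neq 2$ is used once more here, to keep the quadratic Gauss sums of exact size $q^{l}$.
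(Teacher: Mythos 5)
Your route --- reciprocity, then Poisson summation in $V$, then complete cubic exponential sums $\mathcal{K}(P,Y)=\sum_{U\bmod W^2}e\bigl(\frac{a^3\overline{P}U^3+YU}{W^2}\bigr)$ --- is genuinely different from the paper's, and the obstacle you flag at the end is not a technicality but a real gap. Consider $W$ whose monic part is the square of a prime $R$ of degree $l/2$. Then $\bigl(\frac{W}{P}\bigr)$ is a \emph{constant} in $P$, so Lemma \ref{sumofchioverprimes} gives no cancellation in the prime sum, and your $Y=0$ term is of size about $q^{k-2l}\,|\mathcal{K}(P,0)|\,q^{n/2}$ summed over the roughly $q^{l/2}$ such $W$. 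Here the local factor modulo $R^4$ evaluates to $|R|^2\sum_{v\bmod R}e(c\overline{P}v^3/R)$, of size up to $q^{5l/4}$ rather than the $q^{l}$ of the squarefree case, so the naive total is of order $q^{k-l/4+n/2}$ --- not dominated by any of the three terms of the stated bound once $k$ and $n$ are large compared with $l$. The only way to rescue this sub-case is to observe that the degenerate local sums still oscillate in $P$ through a \emph{cubic multiplicative character} of $P$, expand them accordingly, and then apply Lemma \ref{sumofchioverprimes} to that character; but at that point you have reconstructed, in the hardest configuration, exactly the mechanism the paper uses from the start. A secondary problem is your treatment of the $Y\neq 0$ frequencies: you invoke Lemma \ref{sumofchi} on a sum $\sum_{V\in\CM_k}\chi(V)$ that no longer exists after $V$ has been Poisson-summed away, and you invoke Lemma \ref{sumofchioverprimes} on the additive phase $e(a^3U^3P^2/W^2)$, to which it does not directly apply; so neither error term $16^{l}q^{n/2+2+k/2}$ nor $16^{l}q^{2l+2+k/2}$ is actually derived.

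The paper's proof avoids all of this by expanding the additive character directly into multiplicative characters modulo $W^2$,
\begin{align*}
e\Bigl(\frac{(aV)^3\overline{P}}{W^2}\Bigr)=\frac{1}{\phi(W^2)}\sum_{\chi\Mod{W^2}}\chi\bigl((aV)^3\overline{P}\bigr)\,G(1,\overline{\chi}),
\end{align*}
which separates the variables as $\chi^3(aV)\cdot\chi(\overline{P})$ with no Poisson step and no cubic exponential sums. The three terms of the bound then correspond exactly to the three possible degeneracies: $\chi^3$ principal (trivial bound $q^{k+1}$ on the $V$-sum, Lemma \ref{sumofchioverprimes} on the $P$-sum); $\overline{\chi}\cdot\chi_{W}$ principal (Lemma \ref{sumofchi} on the $V$-sum, trivial bound $q^{n/2}$ on the $P$-sum); and the generic case, where both lemmas apply and one pays $\phi(W^2)\ll q^{2l}$ for the number of characters. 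I would encourage you to redo the argument along these lines; your Poisson/cubic-sum strategy could in principle be completed, but only by importing this character expansion into every degenerate local factor, which makes it strictly harder than the direct approach.
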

\begin{rem}
Each non-monic $V$ of degree $k$ can be written as $a V'$ with $V'$ monic of degree $k$ and $a$ invertible. Therefore, summing the above Proposition over $a$ and using the triangle inequality establishes that
$$
\Big |  \sum_{\substack{P \in \CM_{n} \\ P \ \text{prime}}} \frac{1}{|P|^{1/2}} \Big (\sum_{\substack{\deg V =k \\ \deg W =l}} - \Big (\frac{W}{P} \Big ) e\Big ( \frac{V^3\overline{P}}{W^2} \Big ) \Big) \Big | \ll_\varepsilon q^{\varepsilon l } ( q^{k+2}+ 16^{l} q^{\frac{n}{2} + 2 + \frac{k}{2}} + 16^{l} q^{2l + 2+ \frac{k}{2}}  )
$$
We will not use this (more symmetric) result. 
\end{rem}

\begin{proof}
We first separate the variables by using the following expansion of multiplicative characters into additive characters via Gauss sums
\begin{align*}
    e \Big ( \frac{(aV)^3\overline{P}}{W^2} \Big ) = \frac{1}{\phi(W^2)} \sum\limits_{\chi \Mod{ W^2}} \chi((aV)^3\overline{P}) G(1,\overline{\chi})
\end{align*}
which is valid because of the fact; $(\overline{P}, W^2)=1$. Then we obtain
\begin{align*}
    \sum_{\substack{P \in \CM_{n} \\ P \ \text{prime} }} & \frac{1}{|P|^{1/2}} \Big (\sum_{\substack{V \in \mathcal{M}_k \\ \deg W =l}} - \Big (\frac{W}{P} \Big ) e\Big ( \frac{(aV)^3\overline{P}}{W^2} \Big ) \Big) \\ & = \sum_{\substack{P \in \CM_{n} \\ P \ \text{prime}}} \frac{1}{|P|^{1/2}} \Big ( \sum_{\substack{V \in \mathcal{M}_k \\ \deg W =l}} - \Big (\frac{W}{P} \Big ) \frac{1}{\phi(W^2)} \sum\limits_{\chi \Mod{ W^2}} \chi((aV)^3\overline{P}) G(1,\overline{\chi}) \Big ) \\
    & = \sum_{\substack{P \in \CM_{n} \\ P \ \text{prime}}} \sum_{\substack{\chi \Mod{ W^2}}} \frac{-\chi(\overline{P}) \Big (\frac{W}{P} \Big )}{|P|^{1/2}} \sum_{\substack{V \in \mathcal{M}_k \\ \deg W =l}} \frac{1}{\phi(W^2)} \chi^3(a V) G(1,\overline{\chi})
\end{align*}
Now we may split the following sum
\begin{align*}
\sum_{\substack{\chi \Mod{ W^2}}} \sum_{\substack{ P \in \CM_{n} \\ P \ \text{prime}}} \frac{-\chi(\overline{P}) \Big (\frac{W}{P} \Big )}{|P|^{1/2}} \sum_{\substack{V \in \mathcal{M}_k \\ \deg W =l}} \frac{1}{\phi(W^2)} \chi^3(a V) G(1,\overline{\chi})
\end{align*}
into three cases.
\textit{Case $1$ :} $\chi^{3}=\chi_{\mathrm{o}}$ and $\chi(\overline{P}) \Big (\frac{W}{P} \Big ) \neq \chi_{\mathrm{o}}$. Then we have
\begin{align}\label{case1-1stbound}
    \Big | \sum_{\substack{V \in \mathcal{M}_k}} \chi^3(V) \Big | \ll q^{k + 1}
\end{align}
and applying Lemma \ref{sumofchioverprimes} to $-\overline{\chi}.\chi_{W}$,
\begin{align}\label{case1-2ndbound}
     \Big | \sum_{\substack{P \in \CM_{n} \\ P \ \text{prime}}} \frac{-\chi(\overline{P}) \Big (\frac{W}{P} \Big )}{|P|^{1/2}} \Big | \ll_{\varepsilon} q^{\varepsilon l} \ \ \ \ \ \text{for any $\varepsilon > 0$}.
\end{align}
Using \eqref{case1-1stbound} and \eqref{case1-2ndbound}
\begin{align*}
 \Big | \sum_{\substack{P \in \CM_{n} \\ P \ \text{prime}}} \sum_{\substack{\chi \Mod{ W^2} \\ \chi^3 = \chi_{\mathrm{o}}}} \frac{-\chi(\overline{P}) \Big (\frac{W}{P} \Big )}{|P|^{1/2}} \sum_{\substack{V \in \mathcal{M}_k \\ \deg W =l}} \frac{1}{\phi(W^2)} \chi^3(a V) G(1,\overline{\chi}) \Big | & \ll_{\varepsilon}  \sum_{\substack{\chi \Mod{ W^2} \\ \chi^3 = \chi_{\mathrm{o}}}} q^{k+1+\varepsilon l } \sum_{\substack{\deg W =l}}\frac{|G(1,\overline{\chi})|}{\phi(W^2)} \\
& \ll_{\varepsilon} q^{k+1+\varepsilon l } \sum_{\substack{\deg W =l}} \frac{q^l}{\phi(W^2)} \\
& \ll_{\varepsilon} q^{k+2+\varepsilon l}.
\end{align*}

\textit{Case $2$ :} $\chi^{3} \neq \chi_{\mathrm{o}}$ and $\chi(\overline{P}) \Big (\frac{W}{P} \Big ) = \chi_{\mathrm{o}}$. Applying Lemma \ref{sumofchi} to $\chi^3$
\begin{align}\label{case2-1stbound}
    \Big | \sum_{\substack{V \in \mathcal{M}_k}} \chi^3(V) \Big | \ll_{\varepsilon} 16^l q^{\frac{k}{2} + 1+ \varepsilon l} \ \ \ \ \text{for any $\varepsilon > 0 $}
\end{align}
and since $\chi(\overline{P}) \Big (\frac{W}{P} \Big ) = \chi_{\mathrm{o}}$, we have $\chi(P)= \Big (\frac{W}{P}\Big )$. So,
\begin{align*}
\Big |\sum_{\substack{P \in \CM_{n} \\ P \ \text{prime}}} \sum_{\substack{\chi \Mod{ W^2} \\ \chi(P) = \big (\frac{W}{P} \big )}} \frac{-\chi(\overline{P}) \Big (\frac{W}{P} \Big )}{|P|^{1/2}} \sum_{\substack{V \in \mathcal{M}_k \\ \deg W =l}} \frac{1}{\phi(W^2)} \chi^3(V) G(1,\overline{\chi}) \Big | & \ll_{\varepsilon} \sum_{\substack{P \in \CM_{n} \\ P \ \text{prime} }} \frac{16^l q^{\frac{k}{2}+ 1+ \varepsilon l }}{|P|^{1/2}}\sum_{\substack{\deg W =l}} \frac{|G(1,\overline{\chi})|}{\phi(W^2)}  \\
& \ll_{\varepsilon} 16^l q^{\frac{n}{2} + 2 + \frac{k}{2}+ \varepsilon l}.
\end{align*}

\textit{Case $3$:} $\chi^{3} \neq \chi_{\mathrm{o}}$ and $\chi(\overline{P}) \Big (\frac{W}{P} \Big ) \neq \chi_{\mathrm{o}}$. Applying Lemma \ref{sumofchi} to $\chi^3$, 
\begin{align}\label{case3-1stbound}
\Big | \sum_{\substack{V \in \mathcal{M}_k}} \chi^3(V) \Big | \ll_{\varepsilon} 16^l q^{\frac{k}{2} + 1 +\varepsilon l } \ \ \ \ \text{for any $\varepsilon > 0 $}.
\end{align}
and applying Lemma \ref{sumofchioverprimes} to $-\overline{\chi}.\chi_{W}$,
\begin{align}\label{case-32ndbound}
    \Big | \sum_{\substack{P \in \CM_{n} \\ P \ \text{prime} }} \frac{-\chi(\overline{P}) \Big (\frac{W}{P} \Big )}{|P|^{1/2}} \Big | \ll_{\varepsilon} q^{\varepsilon l} \ \ \ \ \ \text{for any $\varepsilon > 0$}.
\end{align}
Using \eqref{case3-1stbound} and \eqref{case-32ndbound} gives 
\begin{align*}
\Big | \sum_{\substack{P \in \CM_{n} \\ P \ \text{prime} }}  \sum_{\substack{\chi \Mod{ W^2}}} \frac{-\chi(\overline{P}) \Big (\frac{W}{P} \Big )}{|P|^{1/2}}  &\sum_{\substack{V \in \mathcal{M}_k \\ \deg W =l}} \frac{1}{\phi(W^2)} \chi^3(V) G(1,\overline{\chi}) \Big | \\ & \ll_{\varepsilon}  \sum_{\substack{\chi \Mod{ W^2}}} 16^{l} q^{\frac{k}{2}+ 1+\varepsilon l +\varepsilon l } \sum_{\substack{\deg W =l}}\frac{|G(1,\overline{\chi})|}{\phi(W^2)}
\\
& \ll_{\varepsilon} 16^{l} q^{\frac{k}{2}+ 1+ \varepsilon l + \varepsilon l } \sum_{\substack{\deg W =l}} q^l
\\
& \ll_{\varepsilon} 16^{l} q^{2l+2+ \frac{k}{2}+ \varepsilon l +\varepsilon l}.
\end{align*}

\end{proof}

\section{Averaging rank over function fields}

\subsection{$\mathcal{L}$-function associated to elliptic curves} 

The $L$-function of $E/\BF_{q}(t)$ is defined analogously as for $E/\Q.$ More precisely, let $P$ be a prime of $\BF_{q}[t]$ i.e. $P \in \BF_{q}[t]$ is a monic irreducible polynomial or $P=P_{\infty}.$  If $P$ is a prime of good reduction, then the reduction of $E$ (which we also denote by $E$) is an elliptic curve over $\BF_{P}=\BF_{q}[t]/(P) \simeq \BF_{q^{\text{deg} P}}$ (where $\BF_{\infty}=\BF_{q}$ since the prime at infinity has degree 1), and
Let $N_P$ be the number of points in the projective coordinates over $\BF_{P}$, i.e. 
$$
N_P = | \{ \mathcal{O} \} \cup \{(x,y) \in \BF_{P}^2 : y^2 - (x^3 + Ax + B) \equiv 0\bmod P\} |.
$$
Let $a_P = q^{\deg P} + 1 - N_{P}$ and then 
$$
\#E(\BF_{P})= q^{\deg P} + 1 - a_P, \ a_P = \alpha_P + \overline{\alpha}_P, \ |\alpha_P |= \sqrt{q^{\deg P}}.
$$
Let 
$$
\mathcal{L}_P (E, u)= 1- a_P u + q^{\deg P} u^2= (1-\alpha_P u) (1 - \overline{\alpha}_P u)
$$
be the $L$-function of $E/\BF_{P}.$ If $P$ is a prime of bad reduction, we define
$$
\mathcal{L}_P(E,u)= (1-a_P u)
$$
where $a_P= 0, 1, -1$ depending on the type of bad reduction (additive, split multiplicative and non-split multiplicative respectively). Let $N_E$ be the conductor of $E$ which is the product of the primes of bad reduction with the appropriate powers. Let $M_E$ (respectively $A_E$) be the product of multiplicative (respectively additive) primes of $E$.
Then $N_E = M_E A_E^2$.

The $L$-\textit{function} of $E$ is
$$
\mathcal{L}(E,u)= \prod_{P \nmid N_E} \frac{1}{\mathcal{L}_P(E,u^{\deg P})} \prod_{P | N_E} \frac{1}{\mathcal{L}_P(E,u^{\deg P})}.
$$
It is proven by Weil that $\mathcal{L}(E,u)$ is a polynomial of degree $ N= \deg N_E - 4= \deg (M_{E}) + 2\deg(A_E) -4,$ setting $u=q^{-s}$ it satisfies a functional equation $s \xleftrightarrow{} 2-s,$ and its zeros lie on Re $s=1$ for elliptic curves in $\mathcal{D}(d)$ for any $d$. More precisely,
$$
\CL(E,u)= \CL(E,q^{-s})= \prod_{i=1}^{N} (1-\mu_i q^{-s}) 
$$
where each $\mu_i$ is an algebraic integer of absolute value $q$ in every complex embedding, moreover $|\mu_{i}| = q$.

In order to prove the main proposition, we need auxiliary lemmas. 

\begin{lem}\label{lemma1} Let $N = \deg N_E - 4$. We have
    \begin{align}\label{nthcoefficiets}
\frac{1}{n}\sum\limits_{i=1}^{N} \Big ( \frac{\mu_i}{q} \Big )^n = - \frac{1}{q^n} \sum\limits_{k |n} \Big (\sum_{\substack{P \nmid N_E \\ \deg P = n/k}} \frac{\alpha_{P}^k + \overline{\alpha}_{P}^k}{k} + \sum_{\substack{P | N_E \\ \deg P = n/k}} \frac{a_{P}^k}{k} \Big ).
\end{align}
\end{lem}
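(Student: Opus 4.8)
The plan is to compute the logarithmic derivative of $\CL(E,u)$ in two ways and compare the coefficients of $u^n$. On the one hand, writing $\CL(E,u) = \prod_{i=1}^N (1 - \mu_i u)$ (using the variable $u$ in place of $q^{-s}$), taking $-u\frac{d}{du}\log$ gives the generating identity $-u\frac{d}{du}\log\CL(E,u) = \sum_{i=1}^N \sum_{m \geq 1} \mu_i^m u^m$, so the coefficient of $u^n$ is $\sum_{i=1}^N \mu_i^n$. On the other hand, from the Euler product $\CL(E,u) = \prod_{P \nmid N_E} \CL_P(E,u^{\deg P})^{-1} \prod_{P \mid N_E} \CL_P(E,u^{\deg P})^{-1}$, I would take the logarithmic derivative factor by factor. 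For a good prime $P$ of degree $e$, $\CL_P(E,u^e)^{-1} = (1-\alpha_P u^e)^{-1}(1-\overline\alpha_P u^e)^{-1}$ contributes $\sum_{j \geq 1} (\alpha_P^j + \overline\alpha_P^j) e\, u^{ej}$ to $-u\frac{d}{du}\log\CL(E,u)$; for a bad prime $P$ of degree $e$, $(1-a_P u^e)^{-1}$ contributes $\sum_{j\geq 1} a_P^j e\, u^{ej}$. Collecting the coefficient of $u^n$ from all primes: each prime $P$ with $\deg P = e$ and $e \mid n$ contributes (with $j = n/e$, i.e. writing $k = n/e$ so $e = n/k$) a term $e \cdot (\alpha_P^{k} + \overline\alpha_P^{k})$ or $e\cdot a_P^{k}$.

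Equating the two expressions for the coefficient of $u^n$ yields
\[
\sum_{i=1}^N \mu_i^n = n\sum_{k \mid n}\Big( \sum_{\substack{P \nmid N_E \\ \deg P = n/k}} \frac{\alpha_P^k + \overline\alpha_P^k}{k} + \sum_{\substack{P \mid N_E \\ \deg P = n/k}} \frac{a_P^k}{k}\Big),
\]
where I have used $e = n/k$ to rewrite the multiplicity $e$ as $n/k$ and pulled out the factor $n$. Dividing both sides by $n\, q^n$ and noting $\mu_i^n / q^n = (\mu_i/q)^n$, I get exactly
\[
\frac{1}{n}\sum_{i=1}^N \Big(\frac{\mu_i}{q}\Big)^n = \frac{1}{q^n}\sum_{k\mid n}\Big(\sum_{\substack{P\nmid N_E\\\deg P = n/k}}\frac{\alpha_P^k + \overline\alpha_P^k}{k} + \sum_{\substack{P\mid N_E\\\deg P = n/k}}\frac{a_P^k}{k}\Big).
\]
The only remaining discrepancy with the stated formula is the sign: the claimed identity has a minus sign on the right. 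This comes from the standard fact that the $L$-function in the numerator/denominator convention here is $\CL(E,u) = \prod_P \CL_P(E,u^{\deg P})^{-1}$ but the polynomial of degree $N$ produced by Weil is the \emph{numerator} of the completed $L$-function, which equals $\prod(1-\mu_i u)$ only up to the contribution of the trivial factors $(1-u)(1-qu)$ coming from $\zeta_{\CC}$; equivalently, one should track that $\prod_{i=1}^N(1-\mu_i u) = \CL(E,u)(1-u)(1-qu)$ or a similar relation, and the logarithmic-derivative contributions of the two extra factors $(1-u)^{-1}(1-qu)^{-1}$ must be accounted for. I would make this precise by recalling the exact relation between $\CL(E,u)$ as defined via the Euler product and the degree-$N$ polynomial $\prod_{i=1}^N(1-\mu_i q^{-s})$, picking up the sign accordingly.

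The main obstacle, then, is purely bookkeeping: getting the sign and the placement of the factor $1/n$ versus $1/k$ exactly right, and correctly handling the relation between the Euler-product $L$-function and the Weil polynomial (including whether the trivial zeros contribute at the relevant $n$). Once the precise normalization $\prod_{i=1}^N(1-\mu_i u) = (\text{trivial factors}) \cdot \CL(E,u)$ is pinned down, comparing coefficients of $u^n$ in the two logarithmic derivatives is routine. I expect no analytic input is needed here — this is a formal identity of power series — so the proof is short once the conventions are fixed.
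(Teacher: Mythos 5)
Your overall strategy --- comparing coefficients of the logarithmic derivative (equivalently, of the logarithm) of $\CL(E,u)$ computed once from $\prod_{i=1}^{N}(1-\mu_i u)$ and once from the Euler product --- is exactly the paper's argument, and your bookkeeping of the divisor sum (setting $k=n/\deg P$ and placing $1/k$ versus $1/n$) is correct. The genuine problem is the sign, and your proposed resolution of it is wrong. The minus sign in the lemma does not come from trivial factors: in the paper's normalization, $\CL(E,u)=\prod_P \CL_P(E,u^{\deg P})^{-1}$ is itself the degree-$N$ polynomial $\prod_{i=1}^{N}(1-\mu_i u)$; there is no extra $(1-u)(1-qu)$ to account for (those factors belong to the zeta function of $\PP^1$, not to $\CL(E,\cdot)$), and even if such factors were present they would contribute additive terms of the shape $1+q^n$ to the $n$-th coefficient, not reverse the sign of the entire sum over primes. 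Chasing that ``normalization issue'' would lead you to a wrong formula.

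The sign is already sitting in your own computation and you dropped it. For a good prime $P$ of degree $e$,
\begin{align*}
-u\frac{d}{du}\log\bigl((1-\alpha_P u^e)^{-1}(1-\overline{\alpha}_P u^e)^{-1}\bigr)
= -e\sum_{j\ge 1}\bigl(\alpha_P^{j}+\overline{\alpha}_P^{j}\bigr)u^{ej},
\end{align*}
with a leading minus sign (and similarly $-e\sum_{j}a_P^{j}u^{ej}$ at bad primes), precisely because the local factors sit in the denominator of the Euler product while the $\mu_i$ sit in the numerator $\prod_i(1-\mu_i u)$: one of the two logarithms carries a minus sign the other does not. Restoring it turns your displayed identity into $\sum_i\mu_i^n=-n\sum_{k\mid n}(\cdots)$, and dividing by $nq^n$ gives the lemma exactly as stated, with nothing left to fix. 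The paper carries out the same formal computation using $\log$ after the substitution $u=z/q$ (which is where the $q^{-n}$ on the right comes from) rather than $-u\,d/du$; the two are equivalent.
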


\begin{proof}
    For any given elliptic curve $E$, we have the following equality
$$ 
\prod_{i=1}^{N} (1-\mu_i u) = \prod_{P \nmid N_E} \frac{1}{\mathcal{L}_P(E,u^{\deg P})} \prod_{P | N_E} \frac{1}{\mathcal{L}_P(E,u^{\deg P})}.
$$
By change of variable $u=\frac{z}{q}$ with $|z| < 1$ and taking the $\log$ of both sides, 
\begin{align*}
\sum\limits_{i=1}^{N} &\log \Big (1- \frac{\mu_i}{q} z \Big ) = \sum_{P \nmid N_E} -\log \Big (\CL_{P} \Big (E,\Big ( \frac{z}{q} \Big )^{\deg P} \Big ) \Big) + \sum_{P | N_E} - \log \Big (\CL_{P} \Big ( E, \Big( \frac{z}{q} \Big) ^{\deg P} \Big) \Big ) \\
&= \sum_{P \nmid N_E} \Big ( -\log \Big (1-\frac{\alpha_{P}}{q^{\deg P}} z^{\deg P} \Big ) - \log \Big ( 1-\frac{\overline{\alpha}_{P}}{q^{\deg P}} z^{\deg P} \Big) \Big) + \sum_{P | N_E} -\log \Big ( 1- \frac{a_P}{q^{\deg P}} z^{\deg P} \Big )
\end{align*}

Taylor expanding in $z$ yields
$$
- \sum\limits_{n=1}^{\infty} \sum\limits_{i=1}^{N} \frac{(\frac{\mu_i}{q} z)^n}{n} =   \sum\limits_{n=1}^{\infty} \Big ( \sum_{P \nmid N_E} \frac{ \Big ( \frac{\alpha_{P}^n + \overline{\alpha}_{P}^n}{q^{n\deg P}} \Big ) z^{n\deg P}} {n} + \sum_{P | N_E} \frac{ (\frac{a_{P}^n}{q^{n\deg P}}) z^{n\deg P}}{n} \Big ). 
$$
Equating the $n$-th coefficients, we get the desired identity.
\end{proof}

Simplifying Lemma \ref{lemma1} we get the following.

\begin{lem}\label{bounding}
    \begin{align}\label{nthcoefficiets}
\frac{1}{n}\sum\limits_{i=1}^{N} \Big ( \frac{\mu_i}{q} \Big )^n =  \frac{1}{q^n} \sum_{\substack{P \nmid N_E \\ \deg P = n}} \Big (\alpha_P + \overline{\alpha_P} \Big ) + \frac{1}{2} + O(q^{-n / 100}).
\end{align}
\end{lem}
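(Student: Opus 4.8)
The statement is a simplification of Lemma~\ref{lemma1}, so the plan is to take the identity there and estimate the divisor sum $\sum_{k\mid n}$ term by term, splitting into the ranges $k=1$, $k=2$, and $k\ge 3$; throughout one uses $\alpha_P\overline{\alpha}_P=|P|=q^{\deg P}$ and $|\alpha_P|=|P|^{1/2}$.

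\emph{The easy terms.} The $k=1$ term has $\deg P=n$ throughout, so its good-prime part is exactly the sum over primes $P\nmid N_E$ of degree $n$ appearing on the right of the statement, while its bad-prime part is $\frac{1}{q^n}\sum_{P\mid N_E,\ \deg P=n}a_P$. An elliptic curve in $\mathcal D(d)$ has at most $\deg N_E\le d$ bad primes, each with $|a_P|\le 1$, so this part is $O(d/q^n)$; more generally the bad-prime contribution from \emph{all} $k$ together is $\le\frac{1}{q^n}\deg N_E=O(d/q^n)$, which is $O(q^{-n/100})$ in the range of $n$ where the lemma is applied (where $q^{n}$ dwarfs every fixed power of $d$). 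For the tail $k\ge 3$ one argues crudely: there $\deg P=n/k$, $|\alpha_P^k+\overline{\alpha}_P^k|\le 2|P|^{k/2}=2q^{n/2}$, and there are $\le q^{n/k}$ primes of degree $n/k$, so the $k$-th term divided by $q^n$ is $\ll \tfrac1k\,q^{n/2+n/k-n}=\tfrac1k\,q^{-n(1/2-1/k)}\le q^{-n/6}$; summing over the $O(\log n)$ divisors $k\ge 3$ of $n$ gives $O(q^{-n/6}\log n)=O(q^{-n/100})$. Neither of these needs any Riemann Hypothesis input.

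\emph{The $k=2$ term is the crux.} Write $m=n/2$. Using $\alpha_P^2+\overline{\alpha}_P^2=(\alpha_P^2+\alpha_P\overline{\alpha}_P+\overline{\alpha}_P^2)-q^{m}$, the good-prime part of this term equals
\[
\tfrac12\!\!\sum_{\substack{P\nmid N_E\\ \deg P=m}}\!\!(\alpha_P^2+\alpha_P\overline{\alpha}_P+\overline{\alpha}_P^2)\;-\;\tfrac12\,q^{m}\cdot\#\{P\ \text{prime}:P\nmid N_E,\ \deg P=m\}.
\]
The first sum is precisely the degree-$m$ prime sum attached to the symmetric-square $L$-function $\mathcal{L}(\Sym^2 E,u)$; by the Riemann Hypothesis (Weil) this $L$-function is a polynomial of degree $O(d)$ with all zeros of absolute value $q^{3/2}$, whence the first sum is $O(d\,q^{3m/2})$. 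The second term is evaluated by the prime number theorem for $\BF_q[t]$, $\#\{P\ \text{prime}:P\nmid N_E,\ \deg P=m\}=q^{m}/m+O(q^{m/2}+d)$. Dividing the displayed expression by $q^{n}=q^{2m}$, the first sum contributes an error $O(d\,q^{-n/4})$ and the second contributes the constant term of the statement together with an error $O(q^{-m/2}+d/q^{m})$; both errors are $O(q^{-n/100})$.

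\emph{Assembling, and the main obstacle.} Adding the $k=1$, $k=2$, and $k\ge 3$ contributions gives the claimed identity, every error term being $O(q^{-n/100})$ once $n$ is large enough in terms of $d$ (so that $\deg N_E$ and $\deg\mathcal{L}(\Sym^2 E)$, both $O(d)$, are dominated by, say, $q^{n/5}$). The only genuinely non-routine step is the $k=2$ term --- specifically the estimate for $\sum_{\deg P=m}(\alpha_P^2+\alpha_P\overline{\alpha}_P+\overline{\alpha}_P^2)$, which is what forces one to invoke the Riemann Hypothesis for the symmetric-square $L$-function of $E$ and to keep track of its (polynomial-in-$d$) degree. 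The $k=1$ and $k\ge 3$ pieces follow immediately from Lemma~\ref{lemma1}, the relation $a_P=\alpha_P+\overline{\alpha}_P$, and the elementary count of primes in $\BF_q[t]$.
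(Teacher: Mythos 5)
Your decomposition of the divisor sum from Lemma~\ref{lemma1} into the pieces $k=1$, $k=2$, $k\geq 3$ plus the bad primes is exactly the paper's route; the only substantive difference is that where the paper disposes of the $k=2$ term by citing an external reference, you evaluate it directly via the symmetric-square $L$-function and the prime number theorem for $\BF_q[t]$. Your $k=1$, $k\geq 3$ and bad-prime estimates are correct (modulo the caveat, which the paper shares, that errors of size $d/q^{n}$ or $dq^{-n/4}$ are only $O(q^{-n/100})$ once $n \gg \log_q d$, whereas the lemma is invoked for all $0<|n|<(7/9-\delta)d$).

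The genuine gap is in the last line of your $k=2$ computation. With $m=n/2$, your own arithmetic gives
\[
-\frac{1}{q^{2m}}\cdot\frac{1}{2}\,q^{m}\,\#\{P \ \text{prime}: \deg P = m\} \;=\; -\frac{1}{2m}+O(q^{-m/2}) \;=\; -\frac{1}{n}+O(q^{-n/4}),
\]
which you then declare to be ``the constant term of the statement''; but the statement's constant is $\tfrac12$, independent of $n$, and $-\tfrac1n \neq -\tfrac12$. Moreover, for odd $n$ the divisor $k=2$ does not occur at all, so no such constant can appear. As written, the proposal therefore does not derive the displayed identity: you must either explain how $-\tfrac{1}{2m}$ becomes $-\tfrac12$ (it does not), or record that the $k=2$ piece contributes $\tfrac{1}{n}\,\mathbbm{1}[2\mid n]$ --- which is what your computation actually (and correctly) shows, and which only produces a total of $+\tfrac12$ after summation over $n$ against the Fej\'er weights $\tfrac1v(1-|n/v|)$ in the proof of the main theorem. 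The same tension is present in the paper's own proof, which asserts the $k=2$ term equals $-\tfrac12$ by citation (and note that Lemma~\ref{lemma1} carries an overall minus sign in front of the $k=1$ sum that the statement of Lemma~\ref{bounding} drops); so your argument is the mathematically sound core, but it does not prove the lemma as stated, and the discrepancy needs to be resolved rather than elided.
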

\begin{rem}
Note that this lemma implies that the sum over primes is bounded by $\deg N_E - 4.$
\end{rem}
\begin{proof}
This follows from the previous Lemma upon noticing that the sum 
$$
\frac{1}{q^n}\sum_{\substack{P \nmid N_E \\ \deg P = n}} (\alpha_P + \overline{\alpha_P})
$$
corresponds to terms with $k = 1$ in the previous Lemma (the contribution of the terms with $P | N_E$ is absorbed into $O(q^{-n/100})$). The terms with $k \geq 3$ contribute a total of $\ll q^{-n / 100}$. It remains to note that, the contribution of the terms with $k = 2$ is
$$
\frac{1}{q^n}\sum_{\substack{P \nmid N_E \\ \deg P = n / 2}} \frac{\alpha_P^2 + \overline{\alpha_P}^2}
{2} = - \frac{1}{2} + O(q^{-n / 100}).$$
This follows from \cite{MR4471523}[Lemma 2.5(3)] and \cite{MR4471523}[(2.7)] with $m=1$ and $k=2.$
\end{proof}

Let $A,B \in \BF_{q}[t]$. Define $F(A,B;P) = \sum\limits_{x \in \BF_{P}} \Big ( \frac{x^3+Ax +B}{P} \Big ),$ then  
   $$ 
   \widehat{F}(\alpha, \beta; P) =  \sum\limits_{A \Mod P} \sum\limits_{B \Mod P} F(A,B;P) e\Big (\frac{\alpha A}{P}\Big ) e\Big (\frac{\beta B}{P}\Big ).
   $$

\begin{lem}\label{matthewsformula}
    Let $P \in \BF_{q}[t]$ be a monic prime polynomial and $\overline{\beta}$ be defined by $\beta\overline{\beta} \equiv 1 \Mod P $ if $(\beta,P)=1$ and $\overline{0}=0$. Then we have
    $$
    \widehat{F}(\alpha, \beta; P) =  |P|^{3/2} \Big ( \frac{\beta}{P} \Big ) e\Big ( \frac{-\alpha^3\overline{\beta}^2}{P} \Big ). 
    $$
    Notice that if $P | \beta$ the sum is zero, consistent with the above formula.
\end{lem}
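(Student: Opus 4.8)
The plan is to expand the definition of $\widehat F$, decouple the three summation variables by a linear substitution, and then recognize each resulting factor as either a Gauss sum or an orthogonality relation; this mirrors the corresponding computation in Young's argument. Writing everything out,
\[
\widehat F(\alpha,\beta;P)=\sum_{x\bmod P}\ \sum_{A\bmod P}\ \sum_{B\bmod P}\Big(\tfrac{x^3+Ax+B}{P}\Big)\,e\Big(\tfrac{\alpha A+\beta B}{P}\Big).
\]
For each fixed pair $(x,A)$ the map $B\mapsto B-x^3-Ax$ is a bijection of $\BF_q[t]/P$, and after performing it the quadratic character becomes $\big(\tfrac{B}{P}\big)$ while the exponent turns into $(\alpha-\beta x)A+\beta B-\beta x^3$, all divided by $P$. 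Using $e(a+b)=e(a)e(b)$, the triple sum then factors as
\[
\widehat F(\alpha,\beta;P)=\sum_{x\bmod P}e\Big(\tfrac{-\beta x^3}{P}\Big)\Big(\sum_{A\bmod P}e\Big(\tfrac{(\alpha-\beta x)A}{P}\Big)\Big)\Big(\sum_{B\bmod P}\Big(\tfrac{B}{P}\Big)e\Big(\tfrac{\beta B}{P}\Big)\Big).
\]

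Next I would evaluate the two inner sums. The $B$-sum is precisely the Gauss sum $G(\beta,\chi_P)$, which by Lemma~\ref{GaussLemma} equals $|P|^{1/2}\big(\tfrac{\beta}{P}\big)$; in particular it vanishes when $P\mid\beta$, and so does the right-hand side of the lemma in that case (since then $\big(\tfrac{\beta}{P}\big)=0$ and $\overline{\beta}=0$), which settles that case. So I may assume $(\beta,P)=1$. The $A$-sum is handled by orthogonality of additive characters modulo $P$ — the same vanishing trick as in the proof of Lemma~\ref{possionformula} — so it equals $|P|$ if $\alpha\equiv\beta x\pmod P$ and $0$ otherwise; since $(\beta,P)=1$, exactly the residue class $x\equiv\alpha\overline{\beta}\pmod P$ survives.

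Substituting these evaluations gives $\widehat F(\alpha,\beta;P)=|P|^{3/2}\big(\tfrac{\beta}{P}\big)\,e\big(\tfrac{-\beta(\alpha\overline{\beta})^3}{P}\big)$, and it remains only to simplify the exponent: $-\beta(\alpha\overline{\beta})^3=-\alpha^3\overline{\beta}^2(\beta\overline{\beta})\equiv-\alpha^3\overline{\beta}^2\pmod P$, and $e(g/P)$ depends only on the class of $g$ modulo $P$, so this equals $|P|^{3/2}\big(\tfrac{\beta}{P}\big)e\big(\tfrac{-\alpha^3\overline{\beta}^2}{P}\big)$, as claimed. I do not expect a genuine obstacle here — the whole thing is a direct computation — and the points that most need care are checking that $B\mapsto B-x^3-Ax$ really is a bijection of the residues, correctly invoking additive orthogonality to isolate $x\equiv\alpha\overline{\beta}$, and justifying the final congruence manipulation in the exponent via the mod-$P$ invariance of $e(\cdot/P)$ recorded in the exponential-function subsection.
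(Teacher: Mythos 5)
Your proposal is correct and follows exactly the same route as the paper's proof: the substitution $B\mapsto B-x^3-Ax$, factoring off the Gauss sum $G(\beta,\chi_P)$ via Lemma~\ref{GaussLemma}, and additive orthogonality in $A$ to isolate $x\equiv\alpha\overline{\beta}\pmod P$. Your write-up is in fact slightly more careful than the paper's, since you explicitly treat the $P\mid\beta$ case and justify the final exponent simplification.
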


\begin{proof}
    By definition,
    \[
\widehat{F}(\alpha, \beta; P) = \sum\limits_{x \in \BF_{P}} \sum\limits_{A \Mod P} \sum\limits_{B \Mod P} \Big ( \frac{x^3+Ax +B}{P} \Big )  e\Big (\frac{\alpha A + \beta B}{P}\Big ).
    \]
    After the change of variables $B \mapsto B -x^3 - Ax$, we have
    \begin{align*}
\widehat{F}(\alpha, \beta; P) &= \sum\limits_{x \in \BF_{P}} e \Big (\frac{-x^3 \beta}{P} \Big) \sum\limits_{A \Mod P} e\Big ( \frac{A (\alpha - x \beta)}{P} \Big ) \sum\limits_{B \Mod P} \Big ( \frac{B}{P} \Big ) e \Big (\frac{\beta B}{P} \Big ) \\
&= \sum\limits_{x \in \BF_{P}} e \Big (\frac{-x^3 \beta}{P} \Big) \sum\limits_{A \Mod P} e\Big ( \frac{A (\alpha - x \beta)}{P} \Big ) G(\beta, \chi_{P}) \\
&= |P|^{3/2}\Big (\frac{\beta}{P} \Big) e\Big ( \frac{-\alpha^3\overline{\beta}^2}{P} \Big ) \ \ \   \text{by Lemma \ref{GaussLemma}. }
    \end{align*}
\end{proof}

We will now begin the proof of the following proposition. 

\begin{prop}\label{mainprop}
Let $\delta > 0$ be given. For $0 < |n| < (7 / 9 - \delta) d$ we have
\begin{align}\label{bound}
\sum_{E \in \mathcal{D}(d)} \frac{1}{q^n} \sum_{\substack{P \nmid N_E \\ \deg P = n}} \Big (\alpha_P + \overline{\alpha_P} \Big )  \ll_{\delta} q^{5 d / 6 - (\delta / 2) d}.
\end{align}
\end{prop}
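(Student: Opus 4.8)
The strategy is to express the left‑hand side as a character sum in the coefficient pair $(A,B)$, complete that sum by Poisson summation, and bound the resulting dual sum by the Main Estimate (Proposition~\ref{mainbound}). Put $m=\lfloor d/3\rfloor$ and $m'=\lfloor d/2\rfloor$. For a prime $P\nmid N_E$ the reduction is good and a point count gives $\alpha_P+\overline{\alpha}_P=a_P=q^{\deg P}+1-N_P=-\sum_{x\in\BF_P}\big(\frac{x^3+Ax+B}{P}\big)=-F(A,B;P)$. Hence, up to a bounded multiplicity coming from the $\BF_q^{\times}$‑action on Weierstrass models,
\[
\sum_{E\in\mathcal{D}(d)}\frac1{q^n}\sum_{\substack{P\nmid N_E\\ \deg P=n}}(\alpha_P+\overline{\alpha}_P)
=-\frac1{q^n}\sum_{\substack{P\in\CM_n\\ P\ \text{prime}}}\ \sum_{\substack{A\in\CH_m,\ B\in\CH_{m'}\\ 4A^3+27B^2\neq0,\ P\,\nmid\,4A^3+27B^2}}F(A,B;P).
\]
I would first remove the two local conditions, paying a small error. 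If $P\mid 4A^3+27B^2$ then $x^3+Ax+B$ has a repeated root $r\in\BF_P$, forcing $A\equiv-3r^2$, $B\equiv2r^3\pmod P$, and a direct computation gives $F(A,B;P)=-\big(\frac{3r}{P}\big)$, so $|F(A,B;P)|\le1$ on such pairs. When $n\le m$ the number of lifts of a fixed residue $(-3r^2,2r^3)$ into $\CH_m\times\CH_{m'}$ is independent of $r$, so the contribution over $r$ is a scalar times $\sum_{r\in\BF_P}\big(\frac{3r}{P}\big)=0$, leaving only the very few pairs with $4A^3+27B^2=0$; when $n>m$ the trivial count $\#\{(A,B):P\mid 4A^3+27B^2\}\ll q^{m+\max(m'-n,0)}$ suffices. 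In either case, after dividing by $q^n$ and summing over $P\in\CM_n$, the discarded pairs contribute $\ll q^{\max(d/6,\,d/2)}\ll q^{5d/6-\delta d/2}$.

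I would then complete $\sum_{A\in\CH_m}\sum_{B\in\CH_{m'}}F(A,B;P)$: applying Corollary~\ref{non-monicpoisson} in $A$ and then in $B$ with modulus $f=P$, and substituting Lemma~\ref{matthewsformula} for $\widehat F$, yields
\[
\sum_{A\in\CH_m}\sum_{B\in\CH_{m'}}F(A,B;P)
=\frac{q^{m+m'}}{|P|^{1/2}}\sum_{\substack{\deg\alpha\le n-m-1\\ \deg\beta\le n-m'-1}}w_\alpha w_\beta\Big(\frac{\beta}{P}\Big)e\!\Big(\frac{-\alpha^{3}\overline{\beta}^{2}}{P}\Big),
\]
where $w_\bullet=-1$ in the top degree and $w_\bullet=q-1$ otherwise. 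The terms with $\beta=0$ vanish because $\big(\frac{0}{P}\big)=0$; in particular, if $n\le m'$ the dual range has no nonzero $\beta$, the whole main term vanishes identically, and the proposition already holds in that range. So suppose $m'<n<(7/9-\delta)d$ and interchange the summations. The part with $\alpha=0$ carries no exponential: by quadratic reciprocity $\big(\frac{\beta}{P}\big)$ equals, up to a sign depending only on $\deg\beta$ and $n$, the value at $P$ of a character of modulus $\beta$, so Lemma~\ref{sumofchioverprimes} bounds the inner prime sum by $q^{\varepsilon\deg\beta}$ unless $\beta$ is a scalar times a square, while each of the $\ll q^{(n-m')/2}$ square $\beta$ contributes $\asymp q^{n/2}/n$; altogether the $\alpha=0$ piece is $\ll q^{7d/12+\varepsilon d}$, which is acceptable.

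For the terms with $\alpha\neq0$ I would separate leading coefficients, writing $\alpha=aV$, $\beta=bW$ with $V,W$ monic of degrees $k=\deg\alpha$, $l=\deg\beta$, and then use quadratic reciprocity together with the reciprocity law for the Hayes exponential to convert $e\!\big(-\alpha^{3}\overline{\beta}^{2}/P\big)$ into $e\!\big((aV)^{3}\overline{P}/W^{2}\big)$ up to a factor $e\!\big(cV^{3}/(PW^{2})\big)$, which in the degree ranges that arise is trivial or a constant independent of $P$. The sum over $P\in\CM_n$ and over monic $V,W$ of degrees $k,l$ is then exactly of the shape estimated by Proposition~\ref{mainbound} (summed over the leading coefficient as in the remark following it), so for each $(k,l)$ it is $\ll_\varepsilon q^{\varepsilon l}\big(q^{k}+16^{\,l}q^{n/2+k/2}+16^{\,l}q^{2l+k/2}\big)$. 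It remains to sum this over $0\le k\le n-m-1$ and $0\le l\le n-m'-1$ and compare the outcome, weighted by the prefactor $q^{m+m'-n}$, against $q^{5d/6-\delta d/2}$. This final balancing is the main obstacle: one must verify that each of the three terms supplied by Proposition~\ref{mainbound}, together with the $\alpha=0$ and discarded‑pair contributions, stays below $q^{5d/6-\delta d/2}$, and it is precisely this constraint that fixes the cutoff $|n|<(7/9-\delta)d$ — and that forces one, when $q$ is small, to replace the crude constant $16$ in the bound $|\CL(u,\chi)|\ll(16q^{\varepsilon})^{\deg h}$ by the sharper $(1-q^{-1/2})^{-2}$.
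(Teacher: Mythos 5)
Your proposal follows the same route as the paper's proof: write $\alpha_P+\overline{\alpha}_P$ as the character sum $F(A,B;P)$, complete the $(A,B)$-sum by applying the non-monic Poisson formula (Corollary \ref{non-monicpoisson}) twice, evaluate the Fourier transform by Lemma \ref{matthewsformula}, convert $e\big(-V^3\overline{W}^2/P\big)$ into $e\big(V^3\overline{P}/W^2\big)$ via the elementary reciprocity \eqref{elementaryformula}, and feed the result into Proposition \ref{mainbound}. In two respects you are more careful than the paper: you address the fact that the condition $P\nmid N_E$ (and the nonvanishing of the discriminant) depends on $(A,B)$ before interchanging the sums, and you isolate the degenerate dual frequencies $\alpha=0$ and $\beta=0$, which Proposition \ref{mainbound} (stated for monic $V$ of positive degree $k$) does not literally cover. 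Those auxiliary estimates look sound.

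The gap is that you stop exactly where the proposition gets proved. The statement is a specific numerical inequality, and ``it remains to sum over $k,l$ and compare against $q^{5d/6-\delta d/2}$'' is not a remark one can defer --- it is the content of the result. Concretely, with $a=\lfloor d/3\rfloor$, $b=\lfloor d/2\rfloor$, the critical contribution is the third term of Proposition \ref{mainbound} at the extremal degrees $k=n-a-1$, $l=n-b-1$: the prefactor $q^{a+b-n}$ times $16^{l}q^{2l+2+k/2}$ has $q$-exponent $\tfrac{3n}{2}-b+\tfrac{a}{2}+O(1)$ plus the $16^{l}$ factor, and demanding that this be at most $a+b-\tfrac{\delta}{2}d$ forces $\tfrac{3n}{2}\lesssim \tfrac{a}{2}+2b=\tfrac{7d}{6}$, i.e.\ $n\lesssim \tfrac{7}{9}d$. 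That is where the threshold $7/9$ in the hypothesis comes from, so the bookkeeping cannot be waved through. Moreover, your own closing observation shows it is not a formality: for the $q$ actually allowed ($q\ge 5$), $16^{l}=q^{l\log_q 16}$ with $l$ as large as $\approx 5d/18$ is not absorbed by the available power saving (the middle term needs roughly $16^{5/18}\le q^{1/6}$, which fails unless $q\ge 16^{5/3}\approx 102$), so one must genuinely replace $16^{\deg h}$ by a smaller imprimitivity factor --- e.g.\ one depending only on the number of distinct primes dividing $W$, which is $q^{o(l)}$ --- and then redo the comparison. Until that term-by-term verification is written out, the proposition is not established.
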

\begin{proof}
Let $a = \lfloor d / 3 \rfloor$ and $b = \lfloor d / 2 \rfloor$.
For any $P$, $a_{P}$ is given by
$$
a_{P}= \alpha_{P} + \overline{\alpha}_{P} = \sum_{x \in \BF_{P}} \Big ( \frac{x^3+Ax+B}{P} \Big ). 
$$

Re-writing the expression in \eqref{bound} we can focus on obtaining a non-trivial upper bound for
\begin{align*}
\sum_{E \in \mathcal{D}(d)} \frac{1}{q^n} \sum_{\substack{P \nmid N_E \\ \deg P = n}} \sum_{x \in \BF_{P}}  \Big (\frac{x^3+Ax+B}{P} \Big ).
\end{align*}

This is equal to

\begin{align*}
     \sum_{\substack{P \nmid N_E \\ \deg P =  n}} \frac{1}{q^n} \sum_{x \in \BF_{P}} \sum_{ \deg A = a} \sum_{ \deg B = b } \Big ( \frac{x^3+Ax+B}{P} \Big ) & = \frac{1}{q^n}  \sum_{\substack{P \nmid N_E \\ \deg P = n}} \sum_{ \deg A = a} \sum_{ \deg B = b } F(A,B;P).    
    \end{align*}
    Applying Corollary \ref{non-monicpoisson} twice to the right-hand side above 
    \[
    \frac{1}{q^n} \Big (\sum_{\substack{P \nmid N_E \\ \deg P = n}} \frac{q^{a+b}}{|P|^2} \Big ( \sum_{\substack{\deg V = n-a -1 \\ \deg W = n-b-1}  } -\widehat{F}(V,W;P) + \sum_{\substack{ \deg V < n - a-1 \\ \deg W < n- b -1}} (q-1) \widehat{F}(V,W;P) \Big) \Big ). 
    \]
Using Lemma \ref{matthewsformula} yields
\begin{align*}
 \sum_{\substack{P \nmid N_E \\ \deg P = n}} \frac{q^{a+b -n }}{|P|^{1/2}} & \Big ( \sum_{\substack{\deg V = n-a -1 \\ \deg W = n-b-1}} - \Big (\frac{W}{P} \Big ) e\Big ( \frac{-V^3\overline{W}^2}{P} \Big ) + \sum_{\substack{\deg V < n- a -1 \\ \deg W = n- b -1}} - \Big (\frac{W}{P} \Big ) e\Big ( \frac{-V^3\overline{W}^2}{P} \Big ) \\
& + \sum_{\substack{ \deg V = n - a -1 \\ \deg W < n- b -1}} (q-1) \Big ( \frac{W}{P} \Big ) e\Big ( \frac{-V^3\overline{W}^2}{P} \Big ) + \sum_{\substack{ \deg V < n - a -1 \\ \deg W < n-b -1}} (q-1) \Big ( \frac{W}{P} \Big ) e\Big ( \frac{-V^3\overline{W}^2}{P} \Big ) \Big) \\
& =  \CS_1 + \CS_2 + \CS_3 + \CS_4 .
\end{align*}
Our goal is to bound each $|\CS_i|$ for $i =1,2,3,4.$ We first study $\CS_1.$ We may employ the following elementary reciprocity formula for function fields, which can be easily shown by using the Chinese remainder theorem;
\begin{align}\label{elementaryformula}
(\alpha\beta) \Big ( \frac{\overline{\alpha}}{\beta} + \frac{\overline{\beta}}{\alpha} \Big ) \equiv 1 \Mod{\alpha\beta}
\end{align}
where $\alpha,\beta \in \BF_{q}[t]$ are polynomials such that $(\alpha, \beta)=1, \alpha \overline{\alpha} \equiv 1 \Mod{\beta},$ and $\beta \overline{\beta} \equiv 1 \Mod{\alpha}.$ In our application, $\alpha = W^2$ and $\beta=P.$ Now $\CS_1$ has transformed into
$$
\CS_1 = \sum_{\substack{P \nmid N_E \\ \deg P = n}}
\frac{q^{a+b - n }}{|P|^{1/2}} \Big ( \sum_{\substack{\deg V = n- a -1 \\ \deg W = n- b -1}} - \Big (\frac{W}{P} \Big ) e\Big ( \frac{-V^3}{W^2P} \Big ) e\Big ( \frac{V^3\overline{P}}{W^2} \Big ) \Big ) $$
When $\deg V = n-a -1$ and $\deg W = n-b-1,$ then $e\Big ( \frac{-V^3}{W^2P} \Big ) = e^{2\pi i(-v_1)}$ where $v_1$ is the leading coefficient of $V$. It then follows from Proposition \ref{mainbound} that
\begin{align*}
\big | \CS_1 \big | &= \Big | \sum_{\substack{P \nmid N_E \\ \deg P = n}} \frac{q^{a+b- n }}{|P|^{1/2}} \Big ( \sum_{\substack{\deg V = n- a -1 \\ \deg W = n- b -1}} - \frac{1}{e^{2\pi i v_1}} \Big (\frac{W}{P} \Big ) e\Big ( \frac{V^3\overline{P}}{W^2} \Big ) \Big ) \Big | \\
& \ll_{\varepsilon} q^{a+b  + \varepsilon ( n- b - 1)} ( q^{- a +1}+ 16^{n-b -1} q^{-\frac{1}{2} a+\frac{3}{2}} + 16^{n- b -1} q^{\frac{3n}{2}-2 b - \frac{1}{2} a - \frac{1}{2}} )
\end{align*}
for any $\varepsilon >0$. Similarly, each $|\CS_i|$ for $i=2,3,4$ is bounded by the same upper bound (up to a constant). Hence, we obtain an estimate
\begin{align*}
\Big | \sum\limits_{E \in \mathcal{D}(d)} (\CS_1 + \CS_2 + \CS_3 + \CS_4)  \Big | 
\ll_{\varepsilon} q^{a+b + \varepsilon ( n- b - 1 )} ( q^{- a +1}+ 16^{n- b -1} q^{-\frac{1}{2}a + \frac{3}{2}} + 16^{n-b -1} q^{\frac{3n}{2}-2 b - \frac{1}{2} a - \frac{1}{2}} ).
\end{align*}

In conclusion, 
\begin{align*}
\frac{1}{ \# \mathcal{D}(d)} \Big |\sum\limits_{E \in \mathcal{D}(d)} (\CS_1 + \CS_2 + \CS_3 + \CS_4)\Big |
\ll_{\varepsilon} q^{\varepsilon ( n- b - 1)} ( q^{- a +1}+ 16^{n-b -1} q^{-\frac{1}{2}a + \frac{3}{2}} + 16^{n- b-1} q^{\frac{3n}{2}-2 b - \frac{1}{2} a - \frac{1}{2}} ).
\end{align*}
Recall that $|n| < (7/9 - \delta) d$. Taking $\varepsilon = \delta / 1000$ we obtain the claim.
\end{proof}

\begin{cor}\label{maincor} 
Let $0 < \delta < 1 / 1000$ be given. For $0 < |n| < (7 / 9 - \delta) d$ we have
    $$ \frac{1}{\# \mathcal{D}(d)} \sum_{E \in \mathcal{D}(d)} \frac{1}{n}\sum\limits_{i=1}^{N} \Big ( \frac{\mu_i}{q} \Big )^n = \frac{1}{2}  + O(q^{- (\delta / 2) d}).$$
\end{cor}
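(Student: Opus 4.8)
The plan is to obtain the corollary by averaging the pointwise identity of Lemma~\ref{bounding} over $E \in \mathcal{D}(d)$, normalizing by $\#\mathcal{D}(d)$, and using Proposition~\ref{mainprop} to dispose of the resulting sum over primes.

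First I would record the size of the family. Since a polynomial of degree exactly $k$ over $\BF_q$ is specified by a nonzero leading coefficient together with $k$ arbitrary lower coefficients, $\#\CH_k = (q-1)q^k$, so
$$
\#\mathcal{D}(d) = (q-1)^2\, q^{\lfloor d/3\rfloor + \lfloor d/2\rfloor}
$$
up to the negligibly many pairs $(A,B)$ removed by the condition $4A^3+27B^2\neq 0$. Because $\lfloor d/3\rfloor + \lfloor d/2\rfloor \ge \tfrac{5d}{6}-2$, this gives $\#\mathcal{D}(d) \gg_q q^{5d/6}$, i.e. $1/\#\mathcal{D}(d) \ll_q q^{-5d/6}$; only this lower bound will be used.

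Next I would apply Lemma~\ref{bounding} to each $E\in\mathcal{D}(d)$, sum over $E$, and divide by $\#\mathcal{D}(d)$. The constant $\tfrac12$ is unaffected by the normalization, the pointwise errors $O(q^{-n/100})$ average to $O(q^{-n/100})$, and the remaining contribution is $\frac{1}{\#\mathcal{D}(d)}\sum_{E\in\mathcal{D}(d)}\frac{1}{q^n}\sum_{P\nmid N_E,\ \deg P = n}(\alpha_P+\overline{\alpha_P})$. By Proposition~\ref{mainprop}, which applies precisely because the hypothesis $0<|n|<(7/9-\delta)d$ is in force, the inner double sum (before dividing by $\#\mathcal{D}(d)$) is $\ll_\delta q^{5d/6-(\delta/2)d}$; combined with $1/\#\mathcal{D}(d)\ll_q q^{-5d/6}$ this term is $\ll_\delta q^{-(\delta/2)d}$. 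Hence the average equals $\tfrac12 + O(q^{-(\delta/2)d})$, once the leftover $O(q^{-n/100})$ is absorbed into the error — immediate when $|n|$ exceeds a fixed multiple of $d$, and otherwise harmless in the density argument in which the corollary is subsequently used. For $n<0$ one first notes $\sum_i(\mu_i/q)^n = \sum_i(\mu_i/q)^{|n|}$, a consequence of $|\mu_i|=q$ and the functional equation, which reduces the estimate to the case $|n|>0$.

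There is no essential obstacle at this step: all the analytic difficulty is already packaged into Proposition~\ref{mainprop} (whose proof in turn rests on the Gauss-sum separation of Proposition~\ref{mainbound} and the function-field Lindel\"of and Riemann Hypothesis estimates of Lemmas~\ref{sumofchi} and~\ref{sumofchioverprimes}). The only place asking for a little care is the comparison of the two error scales $q^{-n/100}$ and $q^{-(\delta/2)d}$ noted above, and the bookkeeping of the negative-$n$ reduction.
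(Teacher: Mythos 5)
Your proof is correct and follows the same route as the paper: average Lemma~\ref{bounding} over the family, invoke Proposition~\ref{mainprop} under the hypothesis $0<|n|<(7/9-\delta)d$, and use $\#\mathcal{D}(d)\asymp q^{5d/6}$ to convert the bound into $O(q^{-(\delta/2)d})$. Your added remarks on absorbing the $O(q^{-n/100})$ term for small $n$ and on reducing negative $n$ to positive $n$ via $|\mu_i|=q$ are sound points of care that the paper's one-line proof leaves implicit.
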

\begin{proof}
This follows from combining Lemma \ref{bounding} and Proposition \ref{mainprop} with the observation that 
$$
\#\mathcal{D}(d) \asymp q^{5 d / 6}.
$$
\end{proof}

\section{Prof of Main Theorem}

By work of Tate, 
$$
\frac{1}{\#\mathcal{D}(d)} \sum_{E \in \mathcal{D}(d)} \text{rk}(E) \leq \frac{1}{\# \mathcal{D}(d)} \sum_{E \in \mathcal{D}(d)} \text{ord}_{s = 1} \mathcal{L}(E, s)
$$
Let $$
T(z) = (1 / v) \sum_{|\ell| < v} (1 - |\ell / v|) z^{\ell}.
$$ 
This function has the property that $T(z) \geq 0$ for all complex $z$ with $|z| = 1$ and $T(1) = 1$. Therefore, the above is less than,
$$
\frac{1}{\#\mathcal{D}(d)} \sum_{E \in \mathcal{D}(d)} \sum_{i} T \Big ( \frac{\mu_i}{q} \Big )
$$
where $\mu_i$ are the zeros of the $L$-function associated with each $E \in \mathcal{D}(d)$.
Now appealing to the main Proposition of section $2$ we see that if $v < (7/9 - \delta) d$ then only the main term contributes. This gives a final bound, that is,
$$
\frac{1}{\# \mathcal{D}(d)} \sum_{E \in \mathcal{D}(d)} \frac{\deg N_E - 4}{(7 / 9 - \delta) d} + \frac{1}{2}.
$$
In the limit this is less than, 
$$
\frac{1}{7 / 9 - \delta} + \frac{1}{2}.
$$
Since $\delta > 0$ is arbitrary we can take $\delta$ to zero and then get the claim. 

\newpage
\printbibliography

\end{document}